\newtheorem{Theorem}{Theorem}[section]
\newtheorem{Lemma}[Theorem]{Lemma}
\newtheorem{Assumption}[Theorem]{Assumption}
\begin{document}
\title{\textbf{Convergence and exponential stability of modified truncated Milstein method for stochastic differential equations
\footnote{Supported by Natural Science Foundation Beijing Municipality(1192013).}}}
\author{ Yu Jiang\quad and\quad Guangqiang Lan\footnote{Corresponding author: Email:
langq@buct.edu.cn.}
\\ \small College of Mathematics and Physics, Beijing University of Chemical Technology, Beijing 100029, China}

\date{}

\maketitle

\begin{abstract}
In this paper, we develop a new explicit scheme called modified truncated Milstein method which is motivated by truncated Milstein method proposed by Guo (2018) and modified truncated Euler-Maruyama method introduced by Lan (2018). We obtain the strong convergence of the scheme under local boundedness and Khasminskii-type conditions, which are relatively weaker than the existing results, and we prove that the convergence rate could be arbitrarily close to 1 under given conditions. Moreover, exponential stability of the scheme is also considered while it is impossible for truncated Milstein method introduced in Guo(2018). Three numerical experiments are offered to support our conclusions.
\end{abstract}

\noindent\textbf{MSC 2010:} 65C30, 65C20, 65L05, 65L20.

\noindent\textbf{Key words:} stochastic differential equations, modified truncated Milstein method, strong convergence, exponential stability, local boundedness condition.

\section{Introduction}
\quad Most equations can not be solved explicitly, so numerical scheme of stochastic differential equations(SDEs) are very significant. There has been a great deal of literature on numerical methods of the corresponding SDEs. For instance, Euler-Maruyama (EM) type methods have been widely considered in the past decades. In general, the convergence rates of EM type methods are expected to be at most $\frac{1}{2}$. Indeed, the classical EM method converges to the corresponding exact solution with rate $\frac{1}{2}$ under global Lipschitz condition. However, global Lipschitz condition is so restrictive that many interesting cases are not included in this setting. So people pay more and more attention to local Lipschitz condition. Higham et al. \cite{H} considered the strong convergence for numerical approximations under local Lipschitz condition (plus some other suitable conditions) for the first time. Since then, many convergence results of Euler type are obtained under local Lipschitz condition. For example, Mao(2015) in \cite{AB} developed a new explicit numerical simulation method, called truncated EM method, and investigated the strong convergence of it. Then in \cite{M1}, the author obtained convergence rates of truncated EM method under similar conditions. Motivated by two papers of Mao, Lan and Xia introduced in \cite{AC} the modified truncated Euler-Maruyama (MTEM) method and obtained the strong convergence rate under relatively weaker conditions. In spite of its explicity, the convergence rate of the EM-type method can only close to $\frac{1}{2}$ from below (but can not reach).

In order to obtain higher convergence rate, a scheme attributed to Milstein came into being (see \cite{AF}). \cite{AH} proposed the tamed Milstein method which allows the drift coefficients to be non-globally Lipschitz continuous but the diffusion coefficients are still globally Lipschitz continuous. \cite{AI} introduced the truncated Milstein method of stochastic differential equations with both of drift and diffusion coefficients satisfying polynomial growth condition. In that paper, the authors proved the strong convergence rate is close to 1. Motivated by them, we will loose their conditions and develop the modified truncated Milstein method, which only need the first and second order derivative of the drift and diffusion coefficients to be locally bounded (see the following Assumption 2.1). What's more, we do not use the following weak monotonicity condition
\begin{align*}
\langle x-y,f(x)-f(y)\rangle+\frac{p-1}{2}|g(x)-g(y)|^{2}\le K|x-y|^{2},
\end{align*}
which is critical in \cite{AI}. Moreover, we allow that the derivative of drift and diffusion coefficients do not satisfy the polynomial growth condition.

Besides, to the best of our knowledge, there are no stability results for the Milstein type numerical methods so far. We give the exponential stability of our scheme which is impossible for the truncated Milstein method in \cite{AI}.

The organization of the paper is as the following. In Section 2, we will introduce some notations and assumptions at first, then the modified truncated Milstein method will be presented. In Section 3, several useful lemmas will be presented for the proof of strong convergence. We mainly discuss the convergence of our scheme and obtain convergence rate in Section 4. In Section 5, we prove exponential stability of our scheme. Finally, we present some examples to verify our conclusions in Section 6.

\section{Modified truncated Milstein method}
\quad Throughout this paper, unless otherwise specified, let $(\Omega,\mathscr{F},\mathbb{P})$ be a complete probability space with a filtration $\{\mathscr{F}_{t}\}_{t\ge 0}$ satisfying the usual conditions (that is, it is right continuous and increasing while $\mathscr{F}_{0}$ contains all $\mathbb{P}$-null sets). Let $B(t)$ be an m-dimensional Brownian motion. $A^{T}$ denotes the transpose of a vector or matrix $A$. $|x|$ is the Euclidean norm of $x\in \mathbb{R}^{d}$. If $A$ is a matrix, $|A|=\sqrt{trace(A^{T}A)}$ denotes its trace norm.

We consider a d-dimensional SDE
\begin{equation}\label{B1}
dx(t)=f(x(t))dt+\sum_{j=1}^{m}g_{j}(x(t))dB^{j}(t)
\end{equation}
on $t\in[0,T]$ with the initial value $x(0)=x_{0}\in \mathbb{R}^{d}$, where
\begin{align*}
f:\mathbb{R}^{d}\rightarrow \mathbb{R}^{d}\quad \textrm{and}\quad g:\mathbb{R}^{d}\rightarrow \mathbb{R}^{d\times m}
\end{align*}
are measurable functions.

For $j_{1},j_{2}=1,2,...,m$, define
\begin{align*}
L^{j_{1}}g_{j_{2}}(x)=\sum_{l=1}^{d}g_{l,j_{1}}(x)\frac{\partial g_{j_{2}}(x)}{\partial x^{l}}.
\end{align*}
It is called commutative if $L^{j_{1}}g_{j_{2}}(x)=L^{j_{2}}g_{j_{1}}(x)$.

And for $l=1,2,...,d$, set
\begin{align*}
 \ f'_{l}(x)=(\frac{\partial f_{l}(x)}{\partial x^{1}},\frac{\partial f_{l}(x)}{\partial x^{2}},...,\frac{\partial            f_{l}(x)}{\partial x^{d}})\quad \textrm{and} \quad f''_{l}(x)=(\frac{\partial^{2}f_{l}(x)}{\partial x^{j}\partial x^{i}})_{i,j},i,j=1,2,...,d.
    \end{align*}
For $l=1,2,...,d, n=1,2,...,m$, $g'_{l,n}(x)$ and $g''_{l,n}(x)$ can be defined in the same way.

 To investigate the convergence of given numerical method, we impose the following standing hypotheses in this paper.

\begin{Assumption}\label{A3} (\textbf{local boundedness condition}) For any $R>0$, there are $K_{R}>0$ and $C'>0$ (independent of $R$) such that
 \begin{equation}
  \label{D}|f'_{l}(x)|\vee|f''_{l}(x)|\vee|g'_{l,n}(x)|\vee|g''_{l,n}(x)|\le K_{R}
   \end{equation}
    holds for all $ x\in \mathbb{R}^{d} $ with $ |x|\le R $.
     \end{Assumption}

\begin{Assumption}\label{A2} (\textbf{Khasminskii-type condition}) There are positive $K$ and $p>2$ such that
 \begin{equation}\label{B17}
  \langle x,f(x)\rangle + \frac{p-1}{2}\sum_{j=1}^{m}|g_{j}(x)|^{2}\le K(1+|x|^{2})
   \end{equation}
    \ for all $x\in \mathbb{R}^{d}$.
     \end{Assumption}

For $j=1,...,m$ and $l=1,...,d$, define the derivative of the vector $g_{j}(x)$ with respect to $x^{l}$ by
\begin{align*}
G_{j}^{l}(x):=\frac{\partial}{\partial x^{l}}g_{j}(x)=(\frac{\partial g_{1,j}(x)}{\partial x^{l}},\frac{\partial g_{2,j}(x)}{\partial x^{l}},...,\frac{\partial g_{d,j}(x)}{\partial x^{l}})^{T}.
\end{align*}
To define the modified truncated Milstein method, we choose a number $\Delta^{\ast}\in (0,1]$ and a strictly decreasing function $h:(0,\Delta^{\ast}]\rightarrow(0,+\infty)$ such that
\begin{equation}\label{B8}
\lim_{\Delta\rightarrow 0}h(\Delta)=\infty.
\end{equation}
For a given step size $\Delta\in(0,1)$ and any $x\in\mathbb{R}^{d}$, define the modified truncated functions by
\begin{equation}\label{mtf1}
 \tilde{f}(x)=\begin{cases}
 f(x) & |x|\le h(\Delta),\\
   \frac{|x|}{h(\Delta)}f(\frac{x}{|x|}h(\Delta)) & |x|>h(\Delta).
    \end{cases}
     \end{equation}
\begin{equation}\label{mtf2}
 \tilde{g}(x)=\begin{cases}
 g(x) & |x|\le h(\Delta),\\
   \frac{|x|}{h(\Delta)}g(\frac{x}{|x|}h(\Delta)) & |x|>h(\Delta).
    \end{cases}
     \end{equation}
\begin{equation}\label{mtf3}
 \tilde{G}_{j}^{l}(x)=G_{j}^{l}\Bigl((|x|\land h(\Delta))\frac{x}{|x|} \Bigl),j=1,2,...,m,l=1,2,...,d
  \end{equation}
where we set $\frac{x}{|x|}=0$ if $x=0$.

If $g$ is commutative, then similar to \cite{AI}, the modified truncated Milstein method can be defined by $Y_0=x(0)=x_0,$
\begin{align}
Y_{k+1} & =Y_{k}+\tilde{f}(Y_{k})\Delta+\sum_{j=1}^{m}\tilde{g}_{j}(Y_{k})\Delta B_{k}^{j}\notag+\frac{1}{2}\sum_{j_{1}=1}^{m}\sum_{j_{2}=1}^{m}\sum_{l=1}^{d}\tilde{g}_{l,j_{1}}(Y_{k})\tilde{G}_{j_{2}}^{l}(Y_{k})\Delta B_{k}^{j_{2}}\Delta B_{k}^{j_{1}}\\
&\quad-\frac{1}{2}\sum_{j=1}^{m}\sum_{l=1}^{d}\tilde{g}_{l,j}(Y_{k})\tilde{G}_{j}^{l}(Y_{k})\Delta, k\ge0.
\end{align}

Notice that if we replace $\tilde{f},\tilde{g}$ with $f$ and $g$, respectively, then it goes back to classical Milstein method.

The two continuous-time versions of the modified truncated Milstein method are defined as the following:
\begin{equation}
\bar{Y}(t)=\sum_{k=0}^{\infty}Y_{k}1_{[k\Delta,(k+1)\Delta)}(t),t\ge 0,
\end{equation}
and
\begin{equation}\label{B4}
\aligned Y(t)&=x_{0}+\int_{0}^{t}\tilde{f}(\bar{Y}(s))ds+\sum_{j=1}^{m}\int_{0}^{t}\tilde{g}_{j}(\bar{Y}(s))dB^{j}(s)\\&
\quad+\sum_{j_{1}=1}^{m}\int_{0}^{t}\sum_{j_{2}=1}^{m}L^{j_{1}}\tilde{g}_{j_{2}}(\bar{Y}(s))\Delta B^{j_{2}}(s)dB^{j_{1}}(s)\endaligned
\end{equation}
where $\Delta B^{j}(s)=\sum_{k=0}^{\infty}1_{[k\Delta,(k+1)\Delta)}(s)(B^{j}(s)-B^{j}(k\Delta))$. It is obvious that all of the above three modified truncated Milstein methods equals at the grid points, that is, $Y_k=\bar{Y}(k\Delta)=Y(k\Delta), \forall k\ge0$.

\section{Some useful lemmas}

\noindent \quad In this section, we state some useful lemmas which will be used in the next section.

Firstly, we state a known result (see e.g. \cite{AA}) as a lemma for the use of this paper.
\begin{Lemma}
Under $(\ref{B17})$ and $(\ref{BA31})$, the SDE $(\ref{B1})$ has a unique global solution $x(t)$. Moreover,
\begin{equation}\label{B16}
\sup_{0\le t\le T}\mathbb{E}|x(t)|^{p}<\infty,\quad \forall T>0.
\end{equation}
\end{Lemma}

Usually, local Lipschitz condition of the coefficients $f$ and $g$ needs to be satisfied. Indeed, we have the following
\begin{Lemma}\label{l0}
If Assumption \ref{A3} holds, then
\begin{equation}\label{BA31}
|f(x)-f(y)|\vee|g_{j}(x)-g_{j}(y)|\le dK_{R}|x-y|,\
|L^{j_{1}}g_{j_{2}}(x)-L^{j_{1}}g_{j_{2}}(y)|\le \bar{K}_{R}|x-y|
\end{equation}
for all $x,y\in \mathbb{R}^{d}$ with $ |x|\vee|y|\le R $ and $j,j_{1},j_{2}=1,2,...,m$, where $\bar{K}_{R}=2(dK_{R}(RK_{R}+|g(0)|\vee1)\vee dK_{R}^{2})$.
\end{Lemma}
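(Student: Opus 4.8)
The plan is to read off both estimates directly from Assumption~\ref{A3} by applying the fundamental theorem of calculus along the straight segment joining $x$ and $y$. The one geometric observation I will use throughout is that the closed ball $\{z\in\mathbb{R}^d:|z|\le R\}$ is convex, so if $|x|\vee|y|\le R$ then $y+\theta(x-y)$ lies in this ball for every $\theta\in[0,1]$; hence every derivative bound in $(\ref{D})$ is valid at every point of that segment.

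For the first inequality I would fix $l\in\{1,\dots,d\}$ and write, componentwise,
\[
f_l(x)-f_l(y)=\int_0^1 f'_l\bigl(y+\theta(x-y)\bigr)\cdot(x-y)\,d\theta ,
\]
so that $|f_l(x)-f_l(y)|\le K_R|x-y|$ by Cauchy--Schwarz and $(\ref{D})$; summing the squares over $l$ gives $|f(x)-f(y)|\le\sqrt d\,K_R|x-y|\le dK_R|x-y|$, and the identical argument applied to each scalar entry $g_{l,j}$ of $g_j$ gives the corresponding bound for $g_j$. Taking $y=0$ in this same step also yields the auxiliary growth bound $|g_j(x)|\le|g(0)|+\sqrt d\,K_R|x|$ for $|x|\le R$ (using $|g_j(0)|\le|g(0)|$), which I will need below; likewise each first derivative of $g$ is bounded by $K_R$ on the ball.

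For the second inequality I would use that $L^{j_1}g_{j_2}(x)=\sum_{l=1}^{d}g_{l,j_1}(x)\,G_{j_2}^l(x)$ is a sum of products of a scalar field and a vector field, and split the difference in the standard telescoping way,
\[
L^{j_1}g_{j_2}(x)-L^{j_1}g_{j_2}(y)=\sum_{l=1}^{d}\Bigl[g_{l,j_1}(x)\bigl(G_{j_2}^l(x)-G_{j_2}^l(y)\bigr)+\bigl(g_{l,j_1}(x)-g_{l,j_1}(y)\bigr)G_{j_2}^l(y)\Bigr],
\]
and then estimate each factor: $|g_{l,j_1}(x)|$ by the growth bound just obtained; $|G_{j_2}^l(y)|\le\sqrt d\,K_R$ from $(\ref{D})$; $|g_{l,j_1}(x)-g_{l,j_1}(y)|\le K_R|x-y|$ from the first part; and, crucially, $|G_{j_2}^l(x)-G_{j_2}^l(y)|\le\sqrt d\,K_R|x-y|$, which is the local Lipschitz property of the \emph{first} derivatives of $g$, obtained by applying the fundamental theorem of calculus once more to $\partial g_{i,j_2}/\partial x^l$ and which is exactly where the bound on the \emph{second} derivatives $|g''_{l,n}|\le K_R$ enters. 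Collecting the $d$ summands and simplifying — absorbing a stray additive constant via $a\le a\vee1$ and a stray multiplicative factor into the maximum — then leads to a bound of the form $\bar K_R|x-y|$ with $\bar K_R=2\bigl(dK_R(RK_R+|g(0)|\vee1)\vee dK_R^2\bigr)$.

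The computations are routine; the only places calling for care are (i) the convexity remark that keeps the connecting segment inside the ball where the derivative bounds hold, and (ii) the product structure of $L^{j_1}g_{j_2}$: because this term involves the \emph{values} of $g$ and not only its derivatives, one is forced to first upgrade the derivative bound to the growth bound $|g_j(x)|\lesssim1+|x|$, and this is the source of the $|g(0)|$ appearing in $\bar K_R$. Pinning down the precise constant is then merely bookkeeping of the factors of $d$, $K_R$, $R$ and $|g(0)|$, which I expect to be the only genuinely fiddly part.
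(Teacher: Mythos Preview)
Your proposal is correct and follows essentially the same route as the paper's proof: the mean value theorem (equivalently, the fundamental theorem of calculus along the segment) for the Lipschitz bound on $f$ and $g_j$, and the same add-and-subtract telescoping on the product structure of $L^{j_1}g_{j_2}$, using the second-derivative bound to get Lipschitz continuity of $G_{j_2}^l$ and the growth bound $|g_{l,j_1}(x)|\le RK_R+|g(0)|$ for the remaining factor. Your explicit remark about convexity of the ball is a useful addition; the rest is indeed just the constant bookkeeping you anticipate.
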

\begin{proof}
By mean value theorem of vector valued function, we have
\begin{align*}
|f(x)-f(y)&|\le |F'(x+\theta(y-x))||x-y|\\&
=\sqrt{\sum_{l=1}^{d}\Bigl|f'_{l}(((x+\theta(y-x))\Bigl|^{2}}|x-y|\\&\le \sqrt{d}K_{R}|x-y|,
\end{align*}
where $\theta\in(0,1)$ and $F'(x)$ is Jacobian Matrix of $f(x)$. Similarly, we can get local Lipschitz condition of $g(x)$ with local Lipschitz coefficient $dK_R$.

Now
\begin{align*}
|L^{j_{1}}g_{j_{2}}(x)-L^{j_{1}}g_{j_{2}}(y)|&\le\Bigl|\sum_{l=1}^{d}g_{l,j_{1}}(x)\frac{\partial g_{j_{2}}(x)}{\partial x^{l}}-\sum_{l=1}^{d}g_{l,j_{1}}(x)\frac{\partial g_{j_{2}}(y)}{\partial y^{l}}\Bigl|\\&\quad+\Bigl|\sum_{l=1}^{d}g_{l,j_{1}}(x)\frac{\partial g_{j_{2}}(y)}{\partial y^{l}}-\sum_{l=1}^{d}g_{l,j_{1}}(y)\frac{\partial g_{j_{2}}(y)}{\partial y^{l}}\Bigl|\\
&\le\sum_{l=1}^{d}|g_{l,j_{1}}(x)|\cdot|\frac{\partial g_{j_{2}}(x)}{\partial x^{l}}-\frac{\partial g_{j_{2}}(y)}{\partial y^{l}}|\\&\quad+\sum_{l=1}^{d}\Bigl|g_{l,j_{1}}(x)-g_{l,j_{1}}(y)\Bigl|\cdot\Bigl|\frac{\partial g_{j_{2}}(y)}{\partial y^{l}}\Bigl|\\
&\le (RK_{R}+|g(0)|)\sum_{l=1}^{d}\sqrt{\sum_{i=1}^{d}\sum_{k=1}^{d}\Bigl|\frac{\partial g_{i,j_{2}}(\xi)}{\partial x^{l}\partial x^{k}}\Bigl|_{\xi=x+\theta(y-x)}^{2}}\cdot|x-y|\\&\quad
+K_{R}\sum_{l=1}^{d}\sqrt{\sum_{i=1}^{d}\Bigl|\frac{\partial g_{l,j_{1}}(\tilde{\xi})}{\partial x^{i}}\Bigl|_{\tilde{\xi}=x+\theta'(y-x)}^{2}}\cdot|x-y|\\
&\le (RK_{R}+|g(0)|)\sqrt{d\sum_{l=1}^{d}\sum_{i=1}^{d}\sum_{k=1}^{d}\Bigl|\frac{\partial g_{i,j_{2}}(\xi)}{\partial x^{l}\partial x^{k}}\Bigl|_{\xi=x+\theta(y-x)}^{2}}\cdot|x-y|\\&\quad
+K_{R}\sqrt{d\sum_{l=1}^{d}\sum_{i=1}^{d}\Bigl|\frac{\partial g_{l,j_{1}}(\tilde{\xi})}{\partial x^{i}}\Bigl|_{\tilde{\xi}=x+\theta'(y-x)}^{2}}\cdot|x-y|\\
&=(RK_{R}+|g(0)|)\sqrt{d\sum_{i=1}^{d}|g''_{i,j_{2}}(\xi)|^{2}}\cdot|x-y|\\&\quad+K_{R}\sqrt{d\sum_{i=1}^{d}|g'_{i,j_{2}}(\xi)|^{2}}\cdot|x-y|\\
&\le dK_{R}(RK_{R}+|g(0)|)|x-y|+dK_{R}^{2}|x-y|\\& \le 2(dK_{R}(RK_{R}+|g(0)|)\vee dK_{R}^{2})|x-y|.
\end{align*}

We complete the proof.
\end{proof}

For the modified truncated function, we have the following
\begin{Lemma}\label{A4} Suppose the local Lipschitz condition $(\ref{BA31})$ holds.
Then for any fixed $\Delta>0$ (sufficiently small), we have
\begin{equation}\label{B6}
|\tilde{f}(x)-\tilde{f}(y)|\vee|\tilde{g}_{j}(x)-\tilde{g}_{j}(y)|\vee|L^{j_{1}}\tilde{g}_{j_{2}}(x)-L^{j_{1}}\tilde{g}_{j_{2}}(y)|\le 4\bar{K}_{h(\Delta)}|x-y|
  \end{equation}
where
\begin{equation}\label{B13}
L^{j_{1}}\tilde{g}_{j_{2}}(x):=\sum_{l=1}^{d}\tilde{g}_{l,j_{1}}(x)\tilde{G}^{l}_{j_{2}}(x).
\end{equation}
\end{Lemma}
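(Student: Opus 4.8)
The plan is to prove the three Lipschitz estimates in \eqref{B6} by splitting into the usual three regions according to whether $|x|$ and $|y|$ lie below or above the truncation threshold $h(\Delta)$. Write $R=h(\Delta)$ for brevity. Case one is $|x|\vee|y|\le R$: here $\tilde f=f$, $\tilde g_j=g_j$ and $L^{j_1}\tilde g_{j_2}=L^{j_1}g_{j_2}$ by the definitions \eqref{mtf1}--\eqref{mtf3} and \eqref{B13}, so the bound is immediate from the local Lipschitz condition \eqref{BA31} with constant $\bar K_R$ (noting $dK_R\le\bar K_R$, so the single constant $4\bar K_R$ dominates). Case two is $|x|\wedge|y|>R$: then $\tilde f(x)=\frac{|x|}{R}f(R\frac{x}{|x|})$ and similarly for $y$, so I would add and subtract $\frac{|x|}{R}f(R\frac{y}{|y|})$, bound the first difference using $\big||x|-|y|\big|\le|x-y|$ together with $|f(R\frac{y}{|y|})|\le|f(0)|+dK_R R$ (from \eqref{BA31}), and bound the second using the local Lipschitz property applied to the two projected points $R\frac{x}{|x|}$, $R\frac{y}{|y|}$, which both have norm $R$, together with the elementary inequality $\big|\frac{x}{|x|}-\frac{y}{|y|}\big|\le\frac{2}{|x|\vee|y|}|x-y|\le\frac{2}{R}|x-y|$. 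The $g_j$ estimate is handled identically, and the $L^{j_1}\tilde g_{j_2}$ estimate uses \eqref{B13}: since $\tilde g_{l,j_1}(x)=\frac{|x|}{R}g_{l,j_1}(R\frac{x}{|x|})$ is itself globally Lipschitz with the constant just produced, and $\tilde G^l_{j_2}(x)=G^l_{j_2}(R\frac{x}{|x|})$ is Lipschitz in $x$ because $G^l_{j_2}$ is locally Lipschitz on $\{|z|\le R\}$ with constant controlled by $K_R$ (via Assumption~\ref{A3} on the second derivatives) and $z\mapsto R\frac{z}{|z|}$ is $\frac{2}{R}$-Lipschitz off the origin, a product rule / triangle inequality argument gives a Lipschitz bound of the same order as $\bar K_R$.

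Case three is the mixed one, say $|x|\le R<|y|$ (the other is symmetric). Here I would insert the point $z:=R\frac{y}{|y|}$, which satisfies $|z|=R$ and, crucially, $|x-z|\le|x-y|$ — this is the standard projection fact that moving $y$ radially inward onto the sphere of radius $R$ does not increase the distance to any point $x$ inside that sphere. Then $\tilde f(x)-\tilde f(y)=f(x)-\frac{|y|}{R}f(z)$; write this as $\big(f(x)-f(z)\big)+\big(1-\frac{|y|}{R}\big)f(z)$. The first term is bounded by $dK_R|x-z|\le dK_R|x-y|$ by \eqref{BA31}. For the second, $\big|1-\frac{|y|}{R}\big|=\frac{|y|-R}{R}=\frac{|y|-|x\text{ proj}|}{R}$; since $|z|=R\ge|x|$ we have $|y|-R\le|y|-|x|\le|x-y|$ when... more carefully, $|y|-R\le|y|-|z'|$ is not quite it — instead use $|y|-R = |y|-|z| \le |y-z|$... no. The clean bound: $\big|\,|y|-R\,\big| = \big|\,|y|-|x\wedge\text{something}|\,\big|$; I will simply note $R<|y|$ and $|x|\le R$ give $|y|-R\le|y|-|x|\le|x-y|$ only if $|x|=R$, which need not hold, so instead I bound $|y|-R\le|y|-|z|+\,0$... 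The correct elementary fact is $\big|\,|y|-R\,\big|\le 2|y-z|$? Since $|z|=R$, $\big|\,|y|-R\,\big|=\big|\,|y|-|z|\,\big|\le|y-z|$, and $|y-z|=|y|\big(1-\tfrac{R}{|y|}\big)=|y|-R$ — so that is circular but consistent; what I actually need is $|y|-R\le|x-y|$. This follows because $x$ lies in the closed ball of radius $R$ while $y$ lies outside, so the segment from $x$ to $y$ must cross the sphere, giving $|x-y|\ge|y|-R$. Combined with the bound $|f(z)|\le|f(0)|+dK_R R$, the second term is $\le\frac{1}{R}(|f(0)|+dK_R R)|x-y|$. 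Summing the two contributions and comparing constants with $\bar K_R=2\big(dK_R(RK_R+|g(0)|\vee1)\vee dK_R^2\big)$ shows everything fits under $4\bar K_{h(\Delta)}$; the factor $4$ is the slack that absorbs the worst-case sum over the three regions and the three quantities. The diffusion and $L^{j_1}\tilde g_{j_2}$ estimates in this case go the same way.

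The main obstacle is entirely bookkeeping rather than conceptual: one must verify that in every case and for each of the three functions the accumulated constant never exceeds $4\bar K_{h(\Delta)}$, which requires keeping track of where the $R=h(\Delta)$ factors cancel against the $\frac{1}{R}$ or $\frac{2}{R}$ Lipschitz constants of the radial projection, and confirming that $|f(0)|$, $|g(0)|$ and the $K_R$-vs-$K_R^2$ terms are all dominated by the definition of $\bar K_R$ in Lemma~\ref{l0} (this is why that lemma carried the apparently wasteful constant with the $\vee 1$ and the $\vee\, dK_R^2$). A secondary technical point worth stating carefully is the geometric inequality $\big|\frac{x}{|x|}-\frac{y}{|y|}\big|\le\frac{2}{|x|\vee|y|}|x-y|$ and the projection inequality $|x-z|\le|x-y|$ for $z=R\frac{y}{|y|}$, $|x|\le R$; both are standard but are the load-bearing facts that make the mixed case work, and I would prove or cite them explicitly before the case analysis. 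Once these are in hand the three cases are short and the constant $4$ is comfortably sufficient.
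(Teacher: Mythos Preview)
Your case-splitting argument (both inside, both outside, mixed) is exactly the standard route and is what the paper intends: it omits the proof entirely, pointing to Lemma~2.2 of \cite{AC}, where precisely this three-case analysis with the radial projection $z=R\tfrac{y}{|y|}$ is carried out. Your reasoning is correct; the only cosmetic issue is the hesitation around $|y|-R\le|x-y|$ in the mixed case, which you do resolve correctly via $|y|-R\le|y|-|x|\le|x-y|$ (reverse triangle inequality together with $|x|\le R$), so tighten that passage and the proof is complete.
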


The proof is similar to Lemma 2.2 of \cite{AC}, so we omit it here.

From now on, we choose $h$ such that
\begin{equation}\Delta\bar{K}^{\frac{9}{4}}_{h(\Delta)}\le 1,\forall \Delta\in (0,\Delta^*).\end{equation}
Note that such $h$ always exists (see \cite{AC}, Remark 2.1).
\begin{Lemma}\label{A5}
Assume that $\ref{A2}$ holds. Then for all $\Delta\in(0,\Delta^{\ast}]$ and any $x\in\mathbb{R}^{d}$,
\begin{equation}\label{B3}
\Bigl\langle x,\tilde{f}(x)\Bigl\rangle + \frac{p-1}{2}\sum_{j=1}^{m}|\tilde{g}_{j}(x)|^{2}\le 2K(1+|x|^{2}).
\end{equation}
\end{Lemma}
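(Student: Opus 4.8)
The plan is to verify the inequality $(\ref{B3})$ by cases according to whether $|x|\le h(\Delta)$ or $|x|>h(\Delta)$, exploiting the fact that the modified truncated functions $\tilde f,\tilde g$ are, by construction $(\ref{mtf1})$–$(\ref{mtf2})$, just radial rescalings of $f,g$ evaluated at the projected point $x_h:=(|x|\wedge h(\Delta))\frac{x}{|x|}$. On the region $|x|\le h(\Delta)$ there is nothing to do: $\tilde f(x)=f(x)$ and $\tilde g_j(x)=g_j(x)$, so Assumption $\ref{A2}$, i.e. $(\ref{B17})$, gives the bound directly with constant $K$, which is $\le 2K(1+|x|^2)$.

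The substantive case is $|x|>h(\Delta)$. Here I would write $\tilde f(x)=\frac{|x|}{h(\Delta)}f(x_h)$ and $\tilde g_j(x)=\frac{|x|}{h(\Delta)}g_j(x_h)$ with $|x_h|=h(\Delta)$, so that
\begin{align*}
\langle x,\tilde f(x)\rangle+\frac{p-1}{2}\sum_{j=1}^m|\tilde g_j(x)|^2
=\frac{|x|}{h(\Delta)}\langle x,f(x_h)\rangle+\frac{|x|^2}{h(\Delta)^2}\cdot\frac{p-1}{2}\sum_{j=1}^m|g_j(x_h)|^2.
\end{align*}
Since $x=\frac{|x|}{h(\Delta)}x_h$, the first term equals $\frac{|x|^2}{h(\Delta)^2}\langle x_h,f(x_h)\rangle$, so the whole left-hand side is $\frac{|x|^2}{h(\Delta)^2}\big(\langle x_h,f(x_h)\rangle+\frac{p-1}{2}\sum_j|g_j(x_h)|^2\big)$. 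Applying $(\ref{B17})$ at the point $x_h$ bounds this by $\frac{|x|^2}{h(\Delta)^2}K(1+|x_h|^2)=\frac{|x|^2}{h(\Delta)^2}K(1+h(\Delta)^2)$. Finally, because $|x|>h(\Delta)$ and $h(\Delta)$ can be assumed $\ge 1$ (by $(\ref{B8})$, for $\Delta$ small enough, which is the regime in which the scheme is run), one has $\frac{1+h(\Delta)^2}{h(\Delta)^2}\le 2$, hence the bound is $\le 2K|x|^2\le 2K(1+|x|^2)$.

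The only mild obstacle is bookkeeping at $x=0$ and the normalization conventions: $\frac{x}{|x|}=0$ when $x=0$ is irrelevant here since $0\le h(\Delta)$ puts us in the trivial case, and one must make sure the assumed lower bound $h(\Delta)\ge 1$ is legitimate — it is, since $h(\Delta)\to\infty$ as $\Delta\to0$, and we only ever need $(\ref{B3})$ for the small step sizes actually used. No use of Assumption $\ref{A3}$ or of Lemmas $\ref{A4}$, $\ref{A5}$'s predecessors is needed; the proof is a one-line scaling identity followed by an application of $(\ref{B17})$ and the elementary estimate $(1+r^2)/r^2\le 2$ for $r\ge 1$. I expect the write-up to be just a few lines.
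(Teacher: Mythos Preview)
Your proposal is correct and is precisely the standard scaling argument: split into the cases $|x|\le h(\Delta)$ and $|x|>h(\Delta)$, in the latter factor out $|x|^2/h(\Delta)^2$ using $x=\tfrac{|x|}{h(\Delta)}x_h$, apply $(\ref{B17})$ at $x_h$, and use $h(\Delta)\ge1$ (ensured by choosing $\Delta^\ast$ small via $(\ref{B8})$) to get the factor $2$. The paper does not give its own proof but refers to Lemma~3.1 of \cite{AC}, which carries out exactly this computation, so your approach coincides with the intended one.
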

For the proof, see Lemma 3.1 in \cite{AC}.

\begin{Lemma}\label{A6}
Let $(\ref{B17})$, $(\ref{BA31})$ and $(\ref{B8})$ hold. Then for any $p\ge2$, $T>0$ and $\Delta^{\ast}$ sufficiently small, there exists $C>0$ such that
\begin{align*}
\sup_{0<\Delta\le\Delta^{\ast}}\sup_{0\le t\le T}\mathbb{E}|Y(t)|^{p}\le C.
\end{align*}
\begin{proof}
Fix the step size $\Delta\in(0,\Delta^{\ast}]$ arbitrarily. For any $t\ge 0$, there exists a unique integer $k\ge 0$ such that $t_{k}\le t\le t_{k+1}$, where $t_{k}=k\Delta$. By the elementary inequality, we derive from (\ref{B4}) that
\begin{align*}
\mathbb{E}|Y(t)-\bar{Y}(t)|^{p}& \le C \mathbb{E}\Bigl(\Bigl|\int_{t_{k}}^{t}\tilde{f}(\bar{Y}(s))ds\Bigl|^{p}+\Bigl|
\sum_{j=1}^{m}\int_{t_{k}}^{t}\tilde{g}_{j}(\bar{Y}(s))dB^{j}(s)\Bigl|^{p}\\
&\quad+\Bigl|\sum_{j_{1}=1}^{m}\int_{t_{k}}^{t}\sum_{j_{2}=1}^{m}L^{j_{1}}\tilde{g}_{j_{2}}(\bar{Y}(s))\Delta B^{j_{2}}(s)dB^{j_{1}}(s)\Bigl|^{p}\Bigl)\\
\end{align*}
where and from now on $C$ is a positive constant independent of $\Delta$ that may change from line to line. Then by the elementary inequality, the H\"{o}lder inequality and Theorem 7.1 in \cite{AA} (page 39),

\begin{align*}
\mathbb{E}|Y(t)-\bar{Y}(t)|^{p}&\le C \Bigl(\Delta^{p-1}\mathbb{E}\int_{t_{k}}^{t}\Bigl|\tilde{f}(\bar{Y}(s))
\Bigl|^{p}ds+\Delta^{\frac{p}{2}-1}\sum_{j=1}^{m}\mathbb{E}\int_{t_{k}}^{t}\Bigl|\tilde{g}_{j}(\bar{Y}(s))\Bigl|^{p}ds\\
&+\Delta^{\frac{p}{2}-1}\mathbb{E}\sum_{j_{1}=1}^{m}\sum_{j_{2}=1}^{m}\int_{t_{k}}^{t}
\Bigl|L^{j_{1}}\tilde{g}_{j_{2}}(\bar{Y}(s))\Bigl|^{p}\Bigl|\Delta B^{j_{2}}(s)\Bigl|^{p}ds\Bigl).
\end{align*}

By Lemma \ref{A4} and the fact that $\mathbb{E}|\Delta B^{j_{2}}(s)|^{p}\le C\Delta^{\frac{p}{2}}$ for $s\in[t_{k},t_{k+1})$, we obtain
\begin{align}\label{B7}
\mathbb{E}|Y(t)-\bar{Y}(t)|^{p}&\le C\bigl(\bar{K}_{h(\Delta)}^{p}\Delta^{p}\mathbb{E}|Y_{k}|^{p}+\Delta^{p}+
\bar{K}_{h(\Delta)}^{p}\Delta^{\frac{p}{2}}\mathbb{E}|Y_{k}|^{p}+\Delta^{\frac{p}{2}}\bigl)\notag\\
&\le C\bigl(\bar{K}_{h(\Delta)}^{p}\Delta^{\frac{p}{2}}\mathbb{E}|Y_{k}|^{p}+\Delta^{\frac{p}{2}}\bigl).
\end{align}

It\^{o}'s formula yields
\begin{align*}
\mathbb{E}|Y(t)|^{p}& \le\mathbb{E}|Y(0)|^{p}+p\mathbb{E}\int_{0}^{t}|Y(s)|^{p-2}
\Bigl\langle Y(s),\tilde{f}(\bar{Y}(s))\Bigl\rangle ds\\
& \quad+p\sum_{j_{1}=1}^{m}\mathbb{E}\int_{0}^{t}\frac{p-1}{2}|Y(s)|^{p-2}
\Bigl|\tilde{g}_{j_{1}}(\bar{Y}(s))+\sum_{j_{2}=1}^{m}L^{j_{1}}\tilde{g}_{j_{2}}(\bar{Y}(s))\Delta B^{j_{2}}(s)\Bigl|^{2}ds.
\end{align*}

Since
$$p|Y(s)|^{p-2}\sum^{m}_{j_{1}=1}\Bigl\langle Y(s),\tilde{g}_{j_{1}}(\bar{Y}(s))+\sum_{j_{2}=1}^{m}L^{j_{1}}\tilde{g}_{j_{2}}(\bar{Y}(s))\Delta B^{j_{2}}(s)\Bigl\rangle$$ is obvious $\mathcal{F}_{s}$-measurable and
\begin{align*}
\mathbb{E}\Bigl(\sum_{j_{1}=1}^{m}\int_{0}^{t}p|Y(s)|^{p-2}\Bigl\langle Y(s),\tilde{g}_{j_{1}}(\bar{Y}(s))+\sum_{j_{2}=1}^{m}L^{j_{1}}\tilde{g}_{j_{2}}(\bar{Y}(s))\Delta B^{j_{2}}(s)\Bigl\rangle dB^{j_{1}}(s)\Bigl)=0.
\end{align*}

Then
\begin{align*}
\mathbb{E}|Y(t)|^{p}& \le\mathbb{E}|Y(0)|^{p}+p\mathbb{E}\int_{0}^{t}|Y(s)|^{p-2}\Bigl(\Bigl\langle \bar{Y}(s),\tilde{f}(\bar{Y}(s))\Bigl\rangle+\frac{p-1}{2}\sum_{j=1}^{m}\Bigl|\tilde{g}_{j}(\bar{Y}(s))\Bigl|^{2}\Bigl)ds\\
& +p(p-1)\sum_{j_{1}=1}^{m}\mathbb{E}\int_{0}^{t}|Y(s)|^{p-2}\Bigl|\sum_{j_{2}=1}^{m}L^{j_{1}}\tilde{g}_{j_{2}}(\bar{Y}(s))\Delta B^{j_{2}}(s)\Bigl|^{2}ds\\
&+ p\mathbb{E}\int_{0}^{t}|Y(s)|^{p-2}\Bigl\langle Y(s)-\bar{Y}(s),\tilde{f}(\bar{Y}(s))\Bigl\rangle ds.
\end{align*}

By (\ref{A4}) and (\ref{B3}), we have
\begin{align*}
\mathbb{E}|Y(t)|^{p}&\le \mathbb{E}|Y(0)|^{p}+C\mathbb{E}\int_{0}^{t}|Y(s)|^{p-2}\Bigl(1+|\bar{Y}(s)|^{2}\Bigl)ds\\
&\quad+C\mathbb{E}\int_{0}^{t}|Y(s)|^{p-2}(|\bar{Y}(s)|^{2}|\bar{K}_{h(\Delta)}|^{2}+1)\Delta ds\\&\quad+p\mathbb{E}\int_{0}^{t}|Y(s)|^{p-2}\Bigl\langle Y(s)-\bar{Y}(s),\tilde{f}(\bar{Y}(s))\Bigl\rangle ds.
\end{align*}

Moreover, Young's inequality implies
\begin{align}\label{B9}
\mathbb{E}|Y(t)|^{p}&\le \mathbb{E}|Y(0)|^{p}+C\mathbb{E}\int_{0}^{t}|Y(s)|^{p}ds+C\mathbb{E}\int_{0}^{t}|\bar{Y}(s)|^{p}ds+Ct\notag\\
&\quad+C\mathbb{E}\int_{0}^{t}|Y(s)-\bar{Y}(s)|^{\frac{p}{2}}|\tilde{f}(\bar{Y}(s))|^{\frac{p}{2}}ds.
\end{align}

On the other hand, by (\ref{B7}) and (\ref{A4}), we have
\begin{align*}
&\quad\mathbb{E}\int_{0}^{t}|Y(s)-\bar{Y}(s)|^{\frac{p}{2}}|\tilde{f}(\bar{Y}(s))|^{\frac{p}{2}}ds\\&\le C\mathbb{E}\int_{0}^{t}(\bar{K}_{h(\Delta)}^{\frac{p}{2}}|\bar{Y}(s)|^{\frac{p}{2}}+1)|Y(s)-\bar{Y}(s)|^{\frac{p}{2}}ds\\
& \le C\mathbb{E}\int_{0}^{t}|Y(s)-\bar{Y}(s)|^{\frac{p}{2}}ds
+C\mathbb{E}\int_{0}^{t}\bar{K}_{h(\Delta)}^{\frac{p}{2}}|Y(s)-\bar{Y}(s)|^{\frac{p}{2}}\cdot |\bar{Y}(s)|^{\frac{p}{2}}ds\\
&\le CT+C(1+\bar{K}_{h(\Delta)}^{p})\mathbb{E}\int_{0}^{t}|Y(s)-\bar{Y}(s)|^{p}ds+C\mathbb{E}\int_{0}^{t}|\bar{Y}(s)|^{p}ds\\
&\le CT+C\mathbb{E}\int_{0}^{t}|\bar{Y}(s)|^{p}ds+C(1+\bar{K}_{h(\Delta)}^{p})
\int_{0}^{t}\bigl(\Delta^{\frac{p}{2}}+\bar{K}_{h(\Delta)}^{p}\Delta^{\frac{p}{2}}\mathbb{E}|\bar{Y}(s)|^{p}\bigl)ds
\end{align*}

 By Lemma \ref{l0} and choose $\Delta\le 1$ small enough, we have
\begin{align*}
(1+K_{h(\Delta)}^{p})(K_{h(\Delta)}^{p}\Delta^{\frac{p}{2}}+\bar{K}_{h(\Delta)}^{p}\Delta^{p})\le (1+\bar{K}_{h(\Delta)}^{p/2})(\bar{K}_{h(\Delta)}^{p/2}\Delta^{p/2}+\bar{K}_{h(\Delta)}^{p}\Delta^{p})\le 2.
\end{align*}

Thus we have
\begin{align}\label{B10}
\mathbb{E}\int_{0}^{t}|Y(s)-\bar{Y}(s)|^{\frac{p}{2}}|\tilde{f}(\bar{Y}(s))|^{\frac{p}{2}}ds\le CT+C\mathbb{E}\int_{0}^{t}|\bar{Y}(s)|^{p}ds.
\end{align}

Substituting (\ref{B10}) into (\ref{B9}), by (\ref{B8}) we have
\begin{align*}
\mathbb{E}|Y(t)|^{p}\le\mathbb{E}|Y(0)|^{p}+CT+C\int_{0}^{t}\Bigl(\sup_{0\le u \le s}\mathbb{E}|Y(u)|^{p}\Bigl)ds.
\end{align*}

By the Gronwall inequality, we obtain
\begin{align*}
\sup_{0\le s \le t}\mathbb{E}|Y(s)|^{p}\le C(1+\mathbb{E}|Y(0)|^{p}).
\end{align*}

Since $C$ is independent of $\Delta$. The proof is complete.
\end{proof}
\end{Lemma}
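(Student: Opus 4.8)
The plan is to run an Itô-formula argument on $|Y(t)|^p$, feed in a one-step error estimate together with the Khasminskii-type bound of Lemma \ref{A5}, and close the loop with Gronwall's inequality, keeping every constant independent of $\Delta$ by exploiting the standing choice $\Delta\bar K_{h(\Delta)}^{9/4}\le 1$.

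First I would establish a one-step estimate for $\mathbb{E}|Y(t)-\bar Y(t)|^p$. Working from the integral form (\ref{B4}) on an interval $[t_k,t]$ with $t\in[t_k,t_{k+1})$, apply the elementary inequality, Hölder's inequality and the Burkholder--Davis--Gundy inequality (Theorem 7.1 of \cite{AA}), then use the global Lipschitz bound of Lemma \ref{A4} — which also yields the linear growth $|\tilde f(x)|\vee|\tilde g_j(x)|\vee|L^{j_1}\tilde g_{j_2}(x)|\le C\bar K_{h(\Delta)}(1+|x|)$ — together with $\mathbb{E}|\Delta B^j(s)|^p\le C\Delta^{p/2}$ on $[t_k,t_{k+1})$. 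This gives $\mathbb{E}|Y(t)-\bar Y(t)|^p\le C(\bar K_{h(\Delta)}^p\Delta^{p/2}\mathbb{E}|Y_k|^p+\Delta^{p/2})$, i.e. the step process and its interpolation stay close in $L^p$.

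Next I would apply Itô's formula to $|Y(t)|^p$; the stochastic integral has zero mean, so after taking expectations only the drift and the trace term survive. I would split $\langle Y(s),\tilde f(\bar Y(s))\rangle=\langle\bar Y(s),\tilde f(\bar Y(s))\rangle+\langle Y(s)-\bar Y(s),\tilde f(\bar Y(s))\rangle$ and expand $|\tilde g_{j_1}(\bar Y(s))+\sum_{j_2}L^{j_1}\tilde g_{j_2}(\bar Y(s))\Delta B^{j_2}(s)|^2$ into $2|\tilde g_{j_1}(\bar Y(s))|^2$ plus a cross term. The combination $\langle\bar Y(s),\tilde f(\bar Y(s))\rangle+\frac{p-1}{2}\sum_j|\tilde g_j(\bar Y(s))|^2$ is controlled by $2K(1+|\bar Y(s)|^2)$ via (\ref{B3}); the cross term is controlled by $C(1+\bar K_{h(\Delta)}^2|\bar Y(s)|^2)\Delta$ using $\mathbb{E}|\Delta B^{j_2}(s)|^2\le\Delta$; and Young's inequality turns every $|Y(s)|^{p-2}(\cdots)$ into $C(|Y(s)|^p+|\bar Y(s)|^p+1)$.

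The main obstacle is the leftover term $p\mathbb{E}\int_0^t|Y(s)|^{p-2}\langle Y(s)-\bar Y(s),\tilde f(\bar Y(s))\rangle\,ds$: Young's inequality reduces it to a multiple of $\mathbb{E}\int_0^t|Y(s)-\bar Y(s)|^{p/2}|\tilde f(\bar Y(s))|^{p/2}\,ds$, and inserting the linear bound on $\tilde f$ and then the one-step estimate above produces a prefactor of the form $(1+\bar K_{h(\Delta)}^{p/2})(\bar K_{h(\Delta)}^{p/2}\Delta^{p/2}+\bar K_{h(\Delta)}^{p}\Delta^{p})$, which the condition $\Delta\bar K_{h(\Delta)}^{9/4}\le 1$ forces to be bounded (each power of $\bar K_{h(\Delta)}$ is dominated by a negative power of $\Delta$, with a bit of slack). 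Hence this term is at most $CT+C\mathbb{E}\int_0^t|\bar Y(s)|^p\,ds$. Since $\mathbb{E}|\bar Y(s)|^p\le\sup_{0\le u\le s}\mathbb{E}|Y(u)|^p$, collecting everything gives $\mathbb{E}|Y(t)|^p\le\mathbb{E}|Y(0)|^p+CT+C\int_0^t\sup_{0\le u\le s}\mathbb{E}|Y(u)|^p\,ds$, and Gronwall's inequality yields $\sup_{0\le s\le t}\mathbb{E}|Y(s)|^p\le C(1+\mathbb{E}|Y(0)|^p)$ with $C$ independent of $\Delta$, as required.
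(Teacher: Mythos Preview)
Your proposal is correct and follows essentially the same route as the paper's own proof: the one-step estimate for $\mathbb{E}|Y(t)-\bar Y(t)|^p$, It\^{o}'s formula on $|Y(t)|^p$, the splitting $\langle Y,\tilde f(\bar Y)\rangle=\langle\bar Y,\tilde f(\bar Y)\rangle+\langle Y-\bar Y,\tilde f(\bar Y)\rangle$, use of Lemma~\ref{A5} and Young's inequality, control of the residual $\int|Y-\bar Y|^{p/2}|\tilde f(\bar Y)|^{p/2}$ via the standing bound $\Delta\bar K_{h(\Delta)}^{9/4}\le1$, and a Gronwall finish are exactly the steps the paper carries out.
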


For any real number $R>|x(0)|$, we define two stopping times
\begin{align*}
\tau_{R}:=\inf\{t\ge 0,|x(t)|\ge R\} \ and\ \rho_{R}:=\inf\{t\ge 0,|Y(t)|\ge R\}.
\end{align*}
\begin{Lemma}\label{A11}
Let Assumption $\ref{A2}$ and $(\ref{BA31})$ hold. For any real number $R>|x_{0}|$, the estimate
\begin{align*}
\mathbb{P}(\tau_{R}\le T)\le \frac{C}{R^{p}}
\end{align*}
holds for some positive constant $C$ independent of $R$.
\begin{proof}
The proof of this lemma is similar to that of (\ref{B16}). In other words, replacing $t$ by $\tau_{R}\land T$ in (\ref{B16}), we have
\begin{align*}
\mathbb{E}|x(\tau_{R}\land T)|^{p}\le C.
\end{align*}

Then
\begin{align*}
C\ge\mathbb{E}|x(\tau_{R}\land T)|^{p}\ge\mathbb{E}|x(t)|^{p}1_{\{\tau_{R}\le T\}}\ge R^{p}\mathbb{P}(\tau_{R}\le T),
\end{align*}
which implies the assertion.
\end{proof}
\end{Lemma}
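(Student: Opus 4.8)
The statement to prove is Lemma \ref{A11}: $\mathbb{P}(\tau_R \le T) \le C/R^p$ for $C$ independent of $R$, under Assumption \ref{A2} and the local Lipschitz condition (\ref{BA31}).

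The plan is to mimic the argument that establishes the moment bound (\ref{B16}) for the exact solution, but run it with the time variable stopped at $\tau_R \wedge T$, and then extract the tail probability by a Chebyshev/Markov-type inequality.

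First I would apply It\^o's formula to $|x(t)|^p$ along the exact solution $x(t)$ of (\ref{B1}), evaluated at the stopping time $\tau_R \wedge T$. The local martingale part (the stochastic integral against $dB^j$) has zero expectation after stopping, since on $[0,\tau_R \wedge T]$ the integrand $|x(s)|^{p-2}\langle x(s), g_j(x(s))\rangle$ is bounded (the path stays in the ball of radius $R$, and $g$ is locally bounded by Lemma \ref{l0} / (\ref{BA31})); hence the stopped stochastic integral is a true martingale. Taking expectations leaves
\begin{align*}
\mathbb{E}|x(\tau_R\wedge T)|^p &= |x_0|^p + p\,\mathbb{E}\int_0^{\tau_R\wedge T}|x(s)|^{p-2}\Bigl(\langle x(s),f(x(s))\rangle + \frac{p-1}{2}\sum_{j=1}^m |g_j(x(s))|^2\Bigr)ds.
\end{align*}
By the Khasminskii-type condition (\ref{B17}) the bracket is bounded by $K(1+|x(s)|^2)$, so the integrand is $\le pK|x(s)|^{p-2}(1+|x(s)|^2) \le pK(1+2|x(s)|^p)$ after Young's inequality (absorbing $|x(s)|^{p-2}$ against the constant). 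This gives
\begin{align*}
\mathbb{E}|x(\tau_R\wedge T)|^p \le |x_0|^p + pKT + 2pK\int_0^T \mathbb{E}|x(\tau_R\wedge s)|^p\, ds,
\end{align*}
and Gronwall's inequality yields $\mathbb{E}|x(\tau_R\wedge T)|^p \le (|x_0|^p + pKT)e^{2pKT} =: C$, with $C$ depending only on $p$, $K$, $T$, $x_0$ — crucially not on $R$. (Alternatively, one simply invokes the proof of (\ref{B16}) verbatim with $t$ replaced by $\tau_R\wedge T$, as the authors suggest.) Finally, on the event $\{\tau_R \le T\}$ we have $|x(\tau_R)| = R$ by continuity of paths and the definition of $\tau_R$ (assuming $f,g$ continuous so $x$ has continuous sample paths), and $\tau_R \wedge T = \tau_R$ there, so $|x(\tau_R \wedge T)|^p \ge R^p \mathbf{1}_{\{\tau_R\le T\}}$; taking expectations gives $C \ge R^p\,\mathbb{P}(\tau_R \le T)$, which is the claim.

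The main subtlety — really the only place requiring care — is the justification that the stochastic integral, once stopped at $\tau_R \wedge T$, has zero expectation: this is exactly where the stopping time earns its keep, because without it the integrand need not satisfy an $L^2$ bound under the mild hypotheses assumed, but with it the path is confined to $\{|x|\le R\}$ where (\ref{BA31}) gives a bound on $g$ (in terms of $K_R$ and $|g(0)|$), making the integrand bounded and hence the integral a genuine martingale on $[0,T]$. Everything else is the standard Khasminskii + Gronwall moment estimate already carried out for (\ref{B16}), so this lemma is essentially a corollary of that argument.
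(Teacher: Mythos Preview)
Your proof is correct and follows exactly the same approach as the paper: rerun the It\^o/Khasminskii/Gronwall argument behind (\ref{B16}) with $t$ replaced by $\tau_R\wedge T$ to obtain $\mathbb{E}|x(\tau_R\wedge T)|^p\le C$ with $C$ independent of $R$, and then extract the tail bound via $C\ge R^p\,\mathbb{P}(\tau_R\le T)$. Your write-up actually supplies more detail than the paper's sketch, in particular the justification that the stopped stochastic integral is a true martingale.
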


\begin{Lemma}\label{A21}
Let Assumption $\ref{A2}$ and $(\ref{BA31})$ hold. For any real number $R>|x(0)|$ and any sufficiently small $\Delta\in(0,\Delta^{\ast}]$, it follows that
\begin{align*}
\mathbb{P}(\rho_{R}\le T)\le \frac{C}{R^{p}}.
\end{align*}
\end{Lemma}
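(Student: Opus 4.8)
The plan is to mimic the proof of Lemma~\ref{A11}, but now using the moment bound for the numerical solution $Y(t)$ (Lemma~\ref{A6}) in place of the moment bound~(\ref{B16}) for the exact solution $x(t)$. The key point is that $|Y(\cdot)|^p$ should be controlled up to the stopping time $\rho_R$, and the bound on $\mathbb{E}|Y(\rho_R\wedge T)|^p$ should be uniform in $\Delta$ and independent of $R$; once this is in hand, a Chebyshev-type estimate finishes the argument exactly as before.

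More precisely, I would first re-run the It\^{o}-formula computation from the proof of Lemma~\ref{A6} but stopping the integrals at $\rho_R\wedge t$ instead of $t$. Since $|\bar Y(s)|\le R\le h(\Delta)$ (for $\Delta$ small enough that $h(\Delta)\ge R$, which is permissible by (\ref{B8}) since we may always shrink $\Delta^\ast$) for $s<\rho_R$, the truncated coefficients coincide with the original ones on the relevant range, but in fact we do not even need that: Lemmas~\ref{A4} and~\ref{A5} give the Khasminskii-type bound (\ref{B3}) and the linear-growth/Lipschitz bounds for $\tilde f,\tilde g, L^{j_1}\tilde g_{j_2}$ for \emph{all} $x$, so the same chain of inequalities (\ref{B7})--(\ref{B10}) goes through verbatim with $t$ replaced by $\rho_R\wedge t$. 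The martingale term still has zero expectation after stopping because $\rho_R$ is a stopping time and the integrand is $\mathcal{F}_s$-measurable and square integrable on $[0,\rho_R\wedge T]$ (its modulus is bounded on that set since $|Y(s)|\le R$ there up to the last jump, and the jump is controlled by Lemma~\ref{A6}). Applying Gronwall exactly as in Lemma~\ref{A6} then yields
\begin{align*}
\sup_{0<\Delta\le\Delta^\ast}\mathbb{E}|Y(\rho_R\wedge T)|^p\le C(1+\mathbb{E}|Y(0)|^p)=:C,
\end{align*}
with $C$ independent of both $R$ and $\Delta$.

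Finally, I would conclude as in Lemma~\ref{A11}: on the event $\{\rho_R\le T\}$ we have $|Y(\rho_R)|\ge R$ by right-continuity of $Y(\cdot)$ (indeed $Y$ is continuous, being an It\^{o} process), hence
\begin{align*}
C\ge\mathbb{E}|Y(\rho_R\wedge T)|^p\ge\mathbb{E}\bigl(|Y(\rho_R)|^p\mathbf{1}_{\{\rho_R\le T\}}\bigr)\ge R^p\,\mathbb{P}(\rho_R\le T),
\end{align*}
so that $\mathbb{P}(\rho_R\le T)\le C/R^p$, as claimed.

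The main obstacle I anticipate is the bookkeeping around the stopped martingale term and making sure the constant $C$ coming out of Gronwall genuinely does not depend on $R$. The latter is really the heart of it: it relies on the fact that the bounds (\ref{B3}) and (\ref{B6}) from Lemmas~\ref{A4} and~\ref{A5} hold globally in $x$ with constants depending only on $\Delta$ (through $h(\Delta)$ and $\bar K_{h(\Delta)}$) and not on $R$, together with the choice $\Delta\bar K^{9/4}_{h(\Delta)}\le 1$ that was used to absorb the step-size terms in (\ref{B7})--(\ref{B10}). As long as one imports those estimates rather than the raw local-Lipschitz constant $K_R$, the argument of Lemma~\ref{A6} is $R$-free and the stopping at $\rho_R$ introduces nothing new; a cosmetic subtlety is that one should verify $\Delta^\ast$ can be chosen small enough that $h(\Delta)\ge R$ for the $R$ of interest, but since the estimate is only asserted "for any sufficiently small $\Delta$", this causes no difficulty.
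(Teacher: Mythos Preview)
Your proposal is correct and is exactly the approach the paper has in mind: the paper simply states that the proof is similar to that of Lemma~\ref{A11} and omits the details, i.e., one repeats the moment-bound argument (here Lemma~\ref{A6}) with $t$ replaced by $\rho_R\wedge t$ and then applies the Chebyshev-type estimate. Your additional care about the $R$-independence of the Gronwall constant and the global validity of (\ref{B3}) and (\ref{B6}) is appropriate but not a departure from the paper's intended argument.
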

The proof is similar to that of Lemma \ref{A11}, so we omit it here.

\begin{Lemma}\label{D2}
Let $Z$ be a predictable stochastic process satisfying $\mathbb{P}(\int^{T}_{0}|Z_{s}|^{2}ds<\infty)=1$. Then for all $t\in[0,T]$ and all $p\ge 2$
\begin{equation}
\Bigl\|\sup_{[0,t]}\Bigl|\int_{0}^{s}Z_{u}dB_{u}\Bigl|\Bigl\|_{L^{p}}\le p\Bigl(\int_{0}^{t}\|Z_{s}\|^{2}_{L^{p}}ds\Bigl)^{1/2}
\end{equation}
where $||X||_{L^p}:=(\mathbb{E}|X|^p)^{1/p}.$
\begin{proof}
Denote $A_{t}=\int_{0}^{t}Z_{s}dB_{s}$, by the It\^{o}'s formula, we have
\begin{align*}
|A(t)|^{p}&=\frac{p}{2}\int_{0}^{t}|A(s)|^{p-2}\cdot |Z_{s}|^{2}ds+p\int_{0}^{t}|A(s)|^{p-2}\cdot \langle A(s),Z_{s} dB_{s}\rangle\\
&\quad+\frac{p(p-2)}{2}\int_{0}^{t}|A(s)|^{p-4}\cdot|A_{s}^{\ast}\cdot Z_{s}|^{2}ds\\
&\le \frac{p(p-1)}{2}\int_{0}^{t}|A(s)|^{p-2}\cdot |Z_{s}|^{2}ds+p\int_{0}^{t}|A(s)|^{p-2}\cdot \langle A(s),Z_{s} dB_{s}\rangle.
\end{align*}

It is easy to know that the second term on the right-hand side is martingale. So we take expectation in the inequality and use H\"{o}lder inequality to derive
\begin{align*}
\mathbb{E}|A(t)|^{p}&\le\frac{p(p-1)}{2}\mathbb{E}\int_{0}^{t}|A(s)|^{p-2}\cdot |Z_{s}|^{2}ds\\
&\le \frac{p(p-1)}{2}\int_{0}^{t}(\mathbb{E}|A(s)|^{p})^{(p-2)/p}\cdot (\mathbb{E}|Z_{s}|^{p})^{2/p}ds\\
&\le \frac{p(p-1)}{2}\int_{0}^{t}(\sup_{u\in[0,s]}\mathbb{E}|A(u)|^{p})^{(p-2)/p}\cdot (\mathbb{E}|Z_{s}|^{p})^{2/p}ds\\
&\le \frac{p(p-1)}{2}(\sup_{s\in[0,t]}\mathbb{E}|A(s)|^{p})^{(p-2)/p}
\int_{0}^{t}(\mathbb{E}|Z_{s}|^{p})^{2/p}ds.
\end{align*}

Since the right hand side is increasing in $t$, it follows that
\begin{align*}
\sup_{s\in[0,t]}\mathbb{E}|A(s)|^{p}&\le \frac{p(p-1)}{2}(\sup_{s\in[0,t]}\mathbb{E}|A(s)|^{p})^{(p-2)/p}
\int_{0}^{t}\|Z_{s}\|^{2}_{L^{p}}ds.
\end{align*}
Dividing both sides by $(\sup_{s\in[0,t]}\mathbb{E}|A(s)|^{p})^{(p-2)/p}$ and applying Doob's maximal inequality one gets
\begin{align*}
\mathbb{E}(\sup_{s\in[0,t]}|A(s)|^{p})&\le (\frac{p}{p-1})^{p}\mathbb{E}|A(t)|^{p}\\
&\le(\frac{p}{p-1})^{p}\cdot(\frac{p(p-1)}{2})^{p/2}(\int_{0}^{t}\|Z_{s}\|^{2}_{L^{p}}ds)^{p/2}
\end{align*}
By the fact that $(\frac{p}{p-1})\cdot(\frac{p(p-1)}{2})^{1/2}\le p$ if $p\ge 2$, we complete the proof.
\end{proof}
\end{Lemma}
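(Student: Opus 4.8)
The plan is to reproduce the classical sharp Burkholder--Davis--Gundy argument: apply It\^o's formula to $|A(t)|^{p}$ where $A(t):=\int_{0}^{t}Z_{u}\,dB_{u}$, take expectations to remove the martingale part, close the resulting inequality for $\Phi(t):=\sup_{u\in[0,t]}\mathbb{E}|A(u)|^{p}$, and then convert the bound on $\mathbb{E}|A(t)|^{p}$ into a bound on $\mathbb{E}\sup_{[0,t]}|A(s)|^{p}$ via Doob's $L^{p}$ maximal inequality. Since the claim is trivial whenever $\int_{0}^{t}\|Z_{s}\|_{L^{p}}^{2}\,ds=\infty$, I would assume this quantity is finite. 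Because the hypothesis only makes $A$ a well-defined local martingale, I would first localize by $\tau_{n}:=\inf\{s\ge 0:\ |A(s)|\ge n\ \text{or}\ \int_{0}^{s}|Z_{u}|^{2}\,du\ge n\}$, prove the estimate for the stopped process $A^{\tau_{n}}$, and pass to the limit $n\to\infty$ using Fatou on the left-hand side and monotone convergence on the right-hand side.

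For the core step, It\^o's formula applied to $x\mapsto|x|^{p}$ (for which $p\ge 2$ guarantees enough regularity, the singular factor $|A(s)|^{p-4}$ being harmless once it is paired with $|A(s)^{\ast}Z_{s}|^{2}$) gives
\begin{align*}
|A(t)|^{p}&=\tfrac{p}{2}\int_{0}^{t}|A(s)|^{p-2}|Z_{s}|^{2}\,ds
+p\int_{0}^{t}|A(s)|^{p-2}\langle A(s),Z_{s}\,dB_{s}\rangle
+\tfrac{p(p-2)}{2}\int_{0}^{t}|A(s)|^{p-4}|A(s)^{\ast}Z_{s}|^{2}\,ds.
\end{align*}
By Cauchy--Schwarz $|A(s)^{\ast}Z_{s}|^{2}\le|A(s)|^{2}|Z_{s}|^{2}$, so the first and third terms combine to at most $\tfrac{p(p-1)}{2}\int_{0}^{t}|A(s)|^{p-2}|Z_{s}|^{2}\,ds$. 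In the localized setting the stochastic integral term has bounded quadratic variation on $[0,\tau_{n}]$, hence is a true martingale and vanishes in expectation; H\"older's inequality with exponents $\tfrac{p}{p-2}$ and $\tfrac{p}{2}$ then yields
\begin{align*}
\mathbb{E}|A(t)|^{p}\le\tfrac{p(p-1)}{2}\int_{0}^{t}\bigl(\mathbb{E}|A(s)|^{p}\bigr)^{(p-2)/p}\,\|Z_{s}\|_{L^{p}}^{2}\,ds.
\end{align*}

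Next I would bound $\mathbb{E}|A(s)|^{p}\le\Phi(t)$ inside the integral; since the right-hand side is nondecreasing in $t$, the same inequality holds with $\mathbb{E}|A(t)|^{p}$ replaced by $\Phi(t)$, giving $\Phi(t)\le\tfrac{p(p-1)}{2}\,\Phi(t)^{(p-2)/p}\int_{0}^{t}\|Z_{s}\|_{L^{p}}^{2}\,ds$. In the localized problem $\Phi(t)<\infty$ (and the case $\Phi(t)=0$ is trivial), so dividing by $\Phi(t)^{(p-2)/p}$ gives $\Phi(t)\le\bigl(\tfrac{p(p-1)}{2}\bigr)^{p/2}\bigl(\int_{0}^{t}\|Z_{s}\|_{L^{p}}^{2}\,ds\bigr)^{p/2}$. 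Finally Doob's $L^{p}$ maximal inequality gives $\mathbb{E}\bigl(\sup_{s\in[0,t]}|A(s)|^{p}\bigr)\le\bigl(\tfrac{p}{p-1}\bigr)^{p}\mathbb{E}|A(t)|^{p}\le\bigl(\tfrac{p}{p-1}\bigr)^{p}\Phi(t)$, and taking $p$-th roots produces the constant $\tfrac{p}{p-1}\bigl(\tfrac{p(p-1)}{2}\bigr)^{1/2}=p\sqrt{\tfrac{p}{2(p-1)}}\le p$, the last step being precisely the hypothesis $p\ge 2$. Removing the localization finishes the argument.

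I expect the main obstacle to be not any individual inequality but the finiteness bookkeeping: one must know $\Phi(t)<\infty$ before dropping the martingale term and before dividing by $\Phi(t)^{(p-2)/p}$, and this is exactly what the stopping-time localization is for. Care is also needed to verify that the stopped stochastic integral is a genuine martingale (rather than merely a local one), which is what legitimizes taking expectations in the It\^o identity.
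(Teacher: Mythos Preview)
Your proposal is correct and follows essentially the same route as the paper: It\^o's formula on $|A(t)|^{p}$, the bound $|A(s)^{\ast}Z_{s}|^{2}\le|A(s)|^{2}|Z_{s}|^{2}$ to produce the coefficient $\tfrac{p(p-1)}{2}$, H\"older with exponents $\tfrac{p}{p-2}$ and $\tfrac{p}{2}$, closing the inequality for $\sup_{s\le t}\mathbb{E}|A(s)|^{p}$, Doob's maximal inequality, and the final constant check $\tfrac{p}{p-1}\bigl(\tfrac{p(p-1)}{2}\bigr)^{1/2}\le p$. The only difference is that you add the stopping-time localization to justify the vanishing of the martingale term and the division by $\Phi(t)^{(p-2)/p}$; the paper's proof omits this and simply asserts the stochastic integral is a martingale, so your version is in fact more careful on exactly the point you flagged as the main obstacle.
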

\begin{Lemma}\label{D1}
Let $Z_{1},...,Z_{N}:\Omega\rightarrow \mathbb{R}$ be $\mathcal{F}/\mathcal{B}(\mathbb{R})$-measurable mapping with $\mathbb{E}\|Z_{n}\|^{p}<\infty$ for all $n\in\{1,...,N\}$ and with $\mathbb{E}[Z_{n+1}|Z_{1},...,Z_{n}]=0$ for all $n\in\{1,...,N\}$. Then
\begin{equation}
\|Z_{1}+...+Z_{n}\|_{L^{p}}\le c_{p}\Bigl(\|Z_{1}\|_{L^{p}}^{2}+...+\|Z_{n}\|_{L^{p}}^{2}\Bigl)^{\frac{1}{2}}
\end{equation}
for every $p\in[2,\infty)$, where $c_{p}$ are constants dependent on $p$, but independent of $n$.
\end{Lemma}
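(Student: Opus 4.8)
The plan is to prove this by induction on $n$, in a way that parallels the It\^o-formula computation behind Lemma \ref{D2}. Set $\mathcal F_n=\sigma(Z_1,\dots,Z_n)$ and $S_n=Z_1+\cdots+Z_n$, so that by hypothesis $(S_n)$ is an $(\mathcal F_n)$-martingale in $L^p$. The key input is the elementary scalar inequality: for each $p\ge2$ there is $\kappa_p>0$ with
\[
|a+b|^p\le|a|^p+p\,|a|^{p-2}ab+\kappa_p\bigl(|a|^{p-2}b^2+|b|^p\bigr),\qquad a,b\in\mathbb R,
\]
obtained from Taylor's theorem for small $|b|$ and from $|a+b|^p\le 2^{p-1}(|a|^p+|b|^p)$ otherwise. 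Inserting $a=S_{n-1}$, $b=Z_n$ and taking expectations, the cross term vanishes because $S_{n-1}$ is $\mathcal F_{n-1}$-measurable and $\mathbb E[Z_n\mid\mathcal F_{n-1}]=0$; H\"older's inequality applied to the remaining mixed term then gives the recursion
\[
\|S_n\|_{L^p}^p\le\|S_{n-1}\|_{L^p}^p+\kappa_p\,\|S_{n-1}\|_{L^p}^{p-2}\,\|Z_n\|_{L^p}^2+\kappa_p\,\|Z_n\|_{L^p}^p .
\]

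Next I would fix a constant $c=c(p)$ large and show by induction that $\|S_n\|_{L^p}^2\le c\sum_{k\le n}\|Z_k\|_{L^p}^2$. Feeding the inductive hypothesis into the recursion, with $s=\sum_{k\le n-1}\|Z_k\|_{L^p}^2$ and $t=\|Z_n\|_{L^p}^2$, the inductive step reduces to the real-variable inequality
\[
c^{p/2}s^{p/2}+\kappa_p\,c^{(p-2)/2}s^{(p-2)/2}t+\kappa_p\,t^{p/2}\le c^{p/2}(s+t)^{p/2},\qquad s,t\ge0 ,
\]
which I would verify by dividing through by $c^{p/2}$ and using the elementary estimate $(s+t)^{p/2}-s^{p/2}\ge\gamma_p\bigl(s^{(p-2)/2}t\vee t^{p/2}\bigr)$ for some $\gamma_p>0$ (seen by scaling $r=t/s$ and noting that $\tfrac{(1+r)^{p/2}-1}{\,r\vee r^{p/2}\,}$ is continuous and strictly positive on $(0,\infty)$, with limits $p/2$ and $1$ at the two ends). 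Choosing $c\ge\max\{1,4\kappa_p/\gamma_p\}$ makes both coefficients on the left at most $\tfrac12\gamma_p$, so the inequality holds; the base case $n=1$ is immediate since $c\ge1$. Then $c_p=\sqrt c$ does the job.

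\textbf{Main obstacle.} Everything in sight is elementary, and the genuine point — the one stressed in the statement — is that $c_p$ must be independent of $n$. Unrolling the recursion crudely forces a geometric blow-up in $n$; what saves it is precisely the convexity estimate for $(s+t)^{p/2}$ above, which lets the one-step increment be absorbed inside $c^{p/2}(s+t)^{p/2}$ rather than inside a factor $(1+\mathrm{const})$ times $c^{p/2}s^{p/2}$. As an alternative route that sidesteps the induction, one could instead quote the Burkholder--Davis--Gundy inequality for discrete-time martingales to get $\|S_n\|_{L^p}\le B_p\bigl\|\sum_{k\le n}Z_k^2\bigr\|_{L^{p/2}}^{1/2}$ and then use the triangle inequality in $L^{p/2}$ (legitimate since $p/2\ge1$), $\bigl\|\sum_{k\le n}Z_k^2\bigr\|_{L^{p/2}}\le\sum_{k\le n}\|Z_k\|_{L^p}^2$, obtaining the claim with $c_p=B_p$, which is manifestly independent of $n$.
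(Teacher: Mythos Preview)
Your proposal is correct, and in fact it does strictly more than the paper: the paper does not prove this lemma at all but simply cites Corollary~6.3.6 in Stroock's \emph{Probability Theory: An Analytic View}. Both of your routes are sound. The inductive argument is a genuinely self-contained, elementary proof: the scalar inequality $|a+b|^p\le|a|^p+p|a|^{p-2}ab+\kappa_p(|a|^{p-2}b^2+|b|^p)$ is the discrete analogue of the It\^o computation in Lemma~\ref{D2}, the martingale-difference property kills the cross term, and your convexity lower bound $(s+t)^{p/2}-s^{p/2}\ge\gamma_p\bigl(s^{(p-2)/2}t\vee t^{p/2}\bigr)$ is exactly what prevents the constant from blowing up with $n$. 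Your alternative via discrete Burkholder--Davis--Gundy followed by the triangle inequality in $L^{p/2}$ is the standard textbook route and is essentially what the cited reference in Stroock delivers; it is shorter but imports a heavier black box. Either way you have supplied what the paper only quotes.
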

We can find the proof from Corollary 6.3.6 in \cite{AJ}.

\section{Strong convergence of the modified truncated Milstein method}

Let $\psi:\mathbb{R}^{d}\rightarrow\mathbb{R}^{d}$ be twice differentiable. Then by Taylor formula, we have
\begin{equation}\label{B11}
\psi(x)-\psi(y)=\psi'(y)(x-y)+R_{1}(\psi)(x,y),
\end{equation}
where $R_{1}(\psi)$ is the remainder term
\begin{equation}
R_{1}(\psi)(x,y):=\int_{0}^{1}(1-\varsigma)\psi''(x+\varsigma(x-y))(x-y,x-y)d\varsigma,
\end{equation}
and
\begin{equation}\label{B12}
\psi'(x)(h_{1}):=\sum_{i=1}^{d}\frac{\partial\psi(x)}{\partial x^{i}}h^{i}_{1},\quad\psi''(x)(h_{1},h_{2}):=\sum_{i,j=1}^{d}\frac{\partial^{2}\psi(x)}{\partial x^{i}\partial x^{j}}h^{i}_{1}h^{j}_{2}
\end{equation}
for any $x,h_{1},h_{2}\in\mathbb{R}^{d}$.
Here,
\begin{align*}
\psi=(\psi_{1},\psi_{2},...,\psi_{d})^T, \frac{\partial\psi}{\partial x^{i}}=\Bigl(\frac{\partial\psi_{1}}{\partial x^{i}},\frac{\partial\psi_{2}}{\partial x^{i}},...,\frac{\partial\psi_{d}}{\partial x^{i}}\Bigl)^T, \frac{\partial^{2}\psi(x)}{\partial x^{i}\partial x^{j}}=(\frac{\partial^{2}\psi_1(x)}{\partial x^i\partial x^j},\cdots,\frac{\partial^2\psi_d(x)}{\partial x^{i}\partial x^{j}})^T.
\end{align*}
Replacing $x$ and $y$ in (\ref{B11}) by $Y(t)$ and $\bar{Y}(t)$, respectively, by (\ref{B4}) we have
\begin{equation}\label{B15}
\psi(Y(t))-\psi(\bar{Y}(t))=\psi'(\bar{Y}(t))(\sum_{j=1}^{m}\int_{t_{k}}^{t}\tilde{g_{j}}(\bar{Y}(s))dB^{j}(s))+\tilde{R}_{1}(\psi),
\end{equation}
where
\begin{equation}\label{B18}
\aligned\tilde{R}_{1}(\psi)&=\psi'(\bar{Y}(t))\Bigl(\int_{t_{k}}^{t}\tilde{f}(\bar{Y}(s))ds+
\sum_{j_{1}=1}^{m}\int_{t_{k}}^{t}\sum_{j_{2}=1}^{m}L^{j_{1}}\tilde{g}_{j_{2}}(\bar{Y}(s))\Delta B^{j_{2}}(s)dB^{j_{1}}(s)\Bigl)\\&\quad+R_{1}(\psi)(Y(t),\bar{Y}(t)).\endaligned
\end{equation}

Notice that $\tilde{R}_{1}$ also depends on $\tilde{f}, \tilde{g}$ and $t$.

Now we have the following

\begin{Lemma}\label{l1}
For any $p\ge1$, $i=1,2,...,m$, let $\Delta\in(0,1)$ sufficiently small. If $R\le h(\Delta)$ and $t\le \theta:=\tau_{R}\land\rho_{R}$, then
\begin{equation}
\|\tilde{R}_{1}(f)\|_{L^{p}}\vee\|\tilde{R}_{1}(g_{i})\|_{L^{p}}\le CK_{R}\bar{K}_{R}^{2}\Delta.
\end{equation}
\begin{proof}
Firstly, we have an estimate on $\|R_{1}(f)\|_{L^{p}}$. Without loss of generality, suppose $\bar{K}_{R}\ge 1$. By Assumption \ref{A3}, $(\ref{BA31})$ and Burkholder-Davis-Gundy inequality, we can find a constant $C$ (independent of $R$ and $\Delta$) such that
\begin{align*}
&\quad\|R_{1}(f)(Y(t),\bar{Y}(t))\|_{L^{p}}\\&\le\int_{0}^{1}(1-\varsigma)
\|f''(\xi)(Y(t)-\bar{Y}(t),Y(t)-\bar{Y}(t))\|_{L^{p}}d\varsigma\\
&\le \int_{0}^{1}(1-\varsigma)\sum_{i,j}^{d}\Bigl\|\frac{\partial^{2}f(\xi)}{\partial x^{i}\partial x^{j}}\Bigl\|_{L^{2p}}\cdot \Bigl\|(Y(t)-\bar{Y}(t))_{i}(Y(t)-\bar{Y}(t))_{j}\Bigl\|_{L^{2p}}d\varsigma\\
&\le \int_{0}^{1}\sum_{i,j}^{d}\Bigl\|\sum_{l=1}^{d}\Bigl|\frac{\partial^{2}f_{l}(\xi)}{\partial x^{i}\partial x^{j}}\Bigl|^{2}\Bigl\|_{L^{p}}^{1/2}\cdot\Bigl\|(Y(t)-\bar{Y}(t))_{i}\Bigl\|_{L^{4p}
}\cdot\Bigl\|(Y(t)-\bar{Y}(t))_{j}\Bigl\|_{L^{4p}}d\varsigma\\
&\le \sqrt{d}K_{R}\cdot d^{2}\|Y(t)-\bar{Y}(t)\|_{L^{4p}}^{2}\\
&\le \sqrt{d}K_{R}\cdot d^{2}\cdot\Bigl(\int_{\kappa(t)}^{t}\|\tilde{f}(\bar{Y}(r))\|_{L^{4p}}dr
+\sum_{j=1}^{d}\Bigl\|\int_{\kappa(t)}^{t}\tilde{g}_{j}(\bar{Y}(r))dB_{r}^{j}\|_{L^{4p}}\\
&\quad+\sum_{j_{1}=1}^{d}\sum_{j_{2}=1}^{d}\Bigl\|\int_{\kappa(t)}^{t}
L^{j_{1}}\tilde{g}_{j_{2}}(\bar{Y}(r))\Delta B^{j_{2}}_{r}dB_{r}^{j}\Bigl\|_{L^{4p}}\Bigl)^{2}\\
&\le CK_{R}(\Delta^2+\Delta) (\bar{K}^2_{R}\|\bar{Y}_{r}\|^2_{L^{4p}}+1)\\
&\le CK_{R}\bar{K}_{R}^{2}\Delta
\end{align*}
where we denote $[\frac{t}{\Delta}]$ by $\kappa(t)$ and $\xi=\bar{Y}(t)+\varsigma(Y(t)-\bar{Y}(t))$.
Notice that we have used the condition $t\le\theta$ in the forth inequality. By (\ref{B18}), it follows that
\begin{align*}
\|\tilde{R}_{1}(f)\|_{L^{p}}&\le \|f'(\bar{Y}(t))\int_{\kappa(t)}^{t}\tilde{f}(\bar{Y}(r))dr\|_{L^{p}}
+\|R_{1}(f)(Y(t),\bar{Y}(t))\|_{L^{p}}\\
&\quad+\|f'(\bar{Y}(t))\sum_{j_{1}=1}^{m}
\sum_{j_{2}=1}^{m}\int_{\kappa(t)}^{t}L^{j_{1}}g_{j_{2}}(\bar{Y}(r))\Delta B^{j_{2}}(r)dB^{j_{1}}(r)\|_{L^{p}}\\
&\le \|f'(\bar{Y}(t))\|_{L^{2p}}\cdot\|\int_{\kappa(t)}^{t}\tilde{f}(\bar{Y}(r))dr\|_{L^{2p}}
+CK_{R}\bar{K}_{R}^{2}\Delta\\
&\quad+\|f'(\bar{Y}(t))\|_{L^{2p}}\cdot\|\sum_{j_{1}=1}^{m}
\sum_{j_{2}=1}^{m}\int_{\kappa(t)}^{t}L^{j_{1}}g_{j_{2}}(\bar{Y}(r))\Delta B^{j_{2}}(r)dB^{j_{1}}(r)\|_{L^{2p}}\\
&\le CK_{R}\bar{K}_{R}\Delta+CK_{R}\bar{K}_{R}^{2}\Delta+CK_{R}\bar{K}_{R}\Delta\\&
\le CK_{R}\bar{K}_{R}^{2}\Delta.
\end{align*}

Similarly, one derive that
\begin{align*}
\|\tilde{R}_{1}(g_{i})\|_{L^{p}}\le CK_{R}\bar{K}_{R}^{2}\Delta.
\end{align*}
\end{proof}
\end{Lemma}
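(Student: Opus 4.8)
The plan is to split $\tilde R_1(\psi)$, for $\psi=f$ and $\psi=g_i$, according to its defining formula (\ref{B18}) into three pieces --- the drift term $\psi'(\bar Y(t))\int_{\kappa(t)}^t\tilde f(\bar Y(s))\,ds$, the double stochastic integral term $\psi'(\bar Y(t))\sum_{j_1,j_2}\int_{\kappa(t)}^t L^{j_1}\tilde g_{j_2}(\bar Y(s))\Delta B^{j_2}(s)\,dB^{j_1}(s)$, and the Taylor remainder $R_1(\psi)(Y(t),\bar Y(t))$ --- and to bound the $L^p$-norm of each by $CK_R\bar K_R^2\Delta$. In all three cases the order $\Delta$ appears almost automatically: for the first two because the integration is over the interval $[\kappa(t),t]$ of length $\le\Delta$ (the factor $\Delta B^{j_2}$ supplying the extra $\sqrt\Delta$ in the second), and for the remainder because it is quadratic in the one-step increment $Y(t)-\bar Y(t)$, which is of order $\sqrt\Delta$.

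The technical core is a one-step estimate: on $\{t\le\theta\}$, for every fixed $q\ge2$, $\|Y(t)-\bar Y(t)\|_{L^q}\le C\bar K_R\sqrt\Delta$, with $C$ independent of $R$ and $\Delta$. I would derive this from the representation (\ref{B4}) restricted to $[\kappa(t),t]$: the elementary inequality splits it into drift, single and double stochastic integrals; H\"{o}lder's inequality handles the drift, Lemma \ref{D2} (or the Burkholder-Davis-Gundy inequality) the single integral, and Lemma \ref{D2} together with $\|\Delta B^{j}(s)\|_{L^q}\le C\sqrt\Delta$ the double integral. The coefficient bounds $\|\tilde f(\bar Y(s))\|_{L^q}\vee\|\tilde g_j(\bar Y(s))\|_{L^q}\vee\|L^{j_1}\tilde g_{j_2}(\bar Y(s))\|_{L^q}\le C\bar K_R(1+\|\bar Y(s)\|_{L^q})$ follow from the local Lipschitz estimate of Lemma \ref{A4} (constant $4\bar K_{h(\Delta)}=4\bar K_R$) together with the fixed values of $f$, $g$ and their first derivatives at the origin, while $\|\bar Y(s)\|_{L^q}$ is bounded uniformly in $\Delta$ by Lemma \ref{A6}. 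The hypothesis $R\le h(\Delta)$ ensures that on $\{t\le\theta\}$, where $|\bar Y(s)|\le R$ for all $s\le t$, the truncations are inactive, so there is no loss in applying Lemma \ref{A4}.

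Given this, the three estimates are routine. For the Taylor remainder, on $\{t\le\theta\}$ the point $\xi=\bar Y(t)+\varsigma(Y(t)-\bar Y(t))$ satisfies $|\xi|\le R$, so the relevant second-derivative norms are $\le\sqrt d\,K_R$ by Assumption \ref{A3}; two applications of H\"{o}lder's inequality (exponents $2p,4p,4p$) pull this out and leave $CK_R\|Y(t)-\bar Y(t)\|_{L^{4p}}^2\le CK_R\bar K_R^2\Delta$. For the drift term, H\"{o}lder gives $\|\psi'(\bar Y(t))\|_{L^{2p}}\cdot\|\int_{\kappa(t)}^t\tilde f(\bar Y(s))\,ds\|_{L^{2p}}\le CK_R\cdot\bar K_R\Delta$, using $|\psi'(x)|\le CK_R$ for $|x|\le R$. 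For the double integral term, H\"{o}lder followed by Lemma \ref{D2} gives $\|\psi'(\bar Y(t))\|_{L^{2p}}\cdot C(\int_{\kappa(t)}^t\|L^{j_1}\tilde g_{j_2}(\bar Y(s))\Delta B^{j_2}(s)\|_{L^{2p}}^2\,ds)^{1/2}\le CK_R\cdot\bar K_R\Delta$. Summing the three and using $\bar K_R\ge1$ gives $\|\tilde R_1(f)\|_{L^p}\le CK_R\bar K_R^2\Delta$; since by Assumption \ref{A3} the first and second derivatives of each $g_i$ obey exactly the same bounds as those of $f$, the identical argument gives $\|\tilde R_1(g_i)\|_{L^p}\le CK_R\bar K_R^2\Delta$.

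The main obstacle is the bookkeeping. One has to keep the integrability exponents consistent --- $L^{4p}$ for the factors of $Y(t)-\bar Y(t)$, $L^{2p}$ for their products with $\psi'$ or $\psi''$ --- so that every $L^q$-norm of $\bar Y$ that appears is finite and uniformly bounded by Lemma \ref{A6}, and, more importantly, one must check that the powers of $\Delta$ combine to exactly $\Delta^1$ and not $\Delta^{1/2}$. No cancellation is needed here; the order-$\Delta$ bound rests solely on the scaling $\|Y(t)-\bar Y(t)\|_{L^q}\le C\bar K_R\sqrt\Delta$, the quadratic nature of the remainder, and the $\sqrt\Delta$ carried by $\Delta B^{j_2}$. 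A minor subtlety worth noting: $\psi'(\bar Y(t))$ is $\mathcal F_{\kappa(t)}$-measurable whereas the integrals over $[\kappa(t),t]$ are not, so no martingale or zero-mean property can be used directly; the H\"{o}lder splitting above sidesteps this.
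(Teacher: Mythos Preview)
Your proposal is correct and follows essentially the same approach as the paper: the same three-way split of $\tilde R_1(\psi)$ via (\ref{B18}), the same H\"older exponents $(2p,4p,4p)$ to separate the second derivatives from the quadratic increment, the same one-step bound $\|Y(t)-\bar Y(t)\|_{L^q}\le C\bar K_R\sqrt\Delta$ derived from (\ref{B4}) on $[\kappa(t),t]$, and the same use of $t\le\theta$ to make $|\xi|\le R$ and hence invoke Assumption~\ref{A3}. Your treatment is in fact slightly more careful than the paper's in flagging that the truncations are inactive on $\{t\le\theta\}$ and that no martingale property of the $\psi'(\bar Y(t))$ factor is being used.
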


\begin{Theorem}\label{A12}
Let Assumption $\ref{A3}$ and $\ref{A2}$ hold for $p\ge4$. For any fixed $R>|x_{0}|$ (sufficiently large), if $\Delta\in(0,\Delta^{\ast}]$ is chosen to be sufficiently small such that $h(\Delta)\ge R$, then
\begin{equation}
\|\sup_{0\le s\le t}e(s\land\theta)\|_{L^{p}}\le C\bar{K}^{5/2}_{h(\Delta)}\Delta
\end{equation}
where $\theta:=\tau_{R}\land\rho_{R}$ and $e(t):=x(t)-Y(t)$.

\begin{proof}
Combining (\ref{B1}) and (\ref{B4}) gives
\begin{equation}\label{B21}
\aligned x(t)-Y(t)&=\int_{0}^{t}[f(x(s))-\tilde{f}(\bar{Y}(s))]ds+\sum_{i=1}^{m}\int_{0}^{t}D_i(s)dB^{i}(s)\endaligned
\end{equation}
where
$$D_i(s):=g_{i}(x(s))-
\tilde{g}_{i}(\bar{Y}(s))-\sum_{j=1}^{m}L^{j}\tilde{g}_{i}(\bar{Y}(s))\Delta B^{j}(s).$$
Applying It\^{o} formula to (\ref{B21}) gives
\begin{equation}\label{B22}
\aligned  |e(t\land\theta)|^{2}&=2\int_{0}^{t\land\theta}\Bigl\langle e(s),f(x(s))-\tilde{f}(\bar{Y}(s))\Bigl\rangle ds+2\sum_{i=1}^{m}\int_{0}^{t\land\theta}\Bigl\langle e(s),D_i(s)\Bigl\rangle dB^{i}(s)\\
&\quad+\sum_{i=1}^{m}\int_{0}^{t\land\theta}\Bigl|D_i(s)\Bigl|^{2}ds.\endaligned
\end{equation}

Since $0\le s\le t\land\theta$, we have $|Y(s)|<R\le h(\Delta)$, which yields $|\bar{Y}(s)|<h(\Delta)$. According to (\ref{mtf1}), (\ref{mtf2}) and (\ref{mtf3}), we have
\begin{align*}
\tilde{f}(\bar{Y}(s))=f(\bar{Y}(s)),\  \ \tilde{g}_{i}(\bar{Y}(s))=g_{i}(\bar{Y}(s))\ \textrm{and}\ L^{j}\tilde{g}_{i}(\bar{Y}(s))=L^{j}g_{i}(\bar{Y}(s)).
\end{align*}

For the integrand of the first term in (\ref{B22}), we use $(\ref{BA31})$ to arrive at
\begin{equation}\label{C1}
\aligned\Bigl\langle e(s),f(x(s))-\tilde{f}(\bar{Y}(s))\Bigl\rangle&\le |e(s)|\cdot|f(x(s))-f(Y(s))|
+\Bigl\langle e(s),f(Y(s))-f(\bar{Y}(s))\Bigl\rangle\\
&\le \bar{K}_{R}|e(s)|^{2}+\Bigl\langle e(s),f(Y(s))-f(\bar{Y}(s))\Bigl\rangle.\endaligned
\end{equation}

Notice that
\begin{align*}
g'_{i}(\bar{Y}(s))(\sum_{j=1}^{m}\int_{\kappa(s)}^{s}\tilde{g}_{j}(\bar{Y}(r))dB^{j}(r))&
=\sum_{l=1}^{d}\frac{\partial g_{i}(\bar{Y}(s))}{\partial x^{l}}\left(\sum_{j=1}^{m}\tilde{g}_{l,j}(\bar{Y}(s))\Delta B^{j}_{s}\right)\\
&=\sum_{j=1}^{m}\sum_{l=1}^{d}\tilde{g}_{l,j}(\bar{Y}(s))\frac{\partial g_{i}}{\partial x^{l}}(\bar{Y}(s))\Delta B^{j}_{s}\\&
=\sum_{j=1}^{m}L^{j}g_{i}(\bar{Y}(s))\Delta B^{j}(s).
\end{align*}

Then one can use an elementary inequality, the notation (\ref{B15}) to get
\begin{equation}\label{C2}
\aligned
 \Bigl|D_i(s)\Bigl|^{2}&=\Bigl|g_{i}(x(s))-g_{i}(Y(s))+\tilde{R}_{1}(g_{i})\Bigl|^{2}\\
&\le 2|g_{i}(x(s))-g_{i}(Y(s))|^{2}+2|\tilde{R}_{1}(g_{i})|^{2}
\\&\le 2\bar{K}^{2}_{R}|e(s)|^{2}+2|\tilde{R}_{1}(g_{i})|^{2}.
\endaligned
\end{equation}

Inserting (\ref{C1}) and (\ref{C2}) into (\ref{B22}) yields
\begin{equation}\aligned
|e(t\land\theta)|^{2}&\le(2\bar{K}_{R}+2m\bar{K}_{R}^{2})\int_{0}^{t\land\theta}|e(s)|^{2}ds
+\int_{0}^{t\land\theta}\Bigl\langle e(s),f(Y(s))-f(\bar{Y}(s))\Bigl\rangle ds\notag\\
&\quad+2\sum_{i=1}^{m}\int_{0}^{t\land\theta}|\tilde{R}_{1}(g_{i})|^{2}ds+2\sum_{i=1}^{m}\int_{0}^{t\land\theta}\Bigl\langle e(s),D_i(s)\Bigl\rangle dB^{i}(s).
\endaligned
\end{equation}

Since $p\ge 4$, we have
\begin{align}\label{C3}
&\quad\Bigl\|\sup_{0\le s\le t}|e(s\land\theta)|\Bigl\|^{2}_{L^{p}}=\Bigl\|\sup_{0\le s\le t}|e(s\land\theta)|^{2}\Bigl\|_{L^{\frac{p}{2}}}\notag\\
&\le (2\bar{K}_{R}+2m\bar{K}_{R}^{2})\int_{0}^{t}\|e(s\land\theta)\|^{2}_{L^{p}}ds
+2\sum_{i=1}^{m}\int_{0}^{t}\|\tilde{R}_{1}(g_{i})\|^{2}_{L^{p}}ds\notag\\
&\quad+2\|\sup_{0\le s\le t}\int_{0}^{s\land\theta}\Bigl\langle e(u),f(Y(u))-f(\bar{Y}(u))\Bigl\rangle du\|_{L^{\frac{p}{2}}}\notag\\
&\quad+2\|\sup_{0\le s\le t}\sum_{i=1}^{m}\int_{0}^{s\land\theta}\Bigl\langle e(u),D_i(u)\Bigl\rangle dB^{i}(u)\|_{L^{\frac{p}{2}}}.
\end{align}

For the last term on the right-hand side of (\ref{C3}), we use Lemma \ref{D2} and (\ref{BA31}) to derive
\begin{align}\label{C4}
&\quad 2\|\sup_{0\le s\le t}\sum_{i=1}^{m}\int_{0}^{s\land\theta}\Bigl\langle e(u),D_i(u)\Bigl\rangle dB^{i}(u)\|_{L^{\frac{p}{2}}}\notag\\
&\le p\sum_{i=1}^{m}\Bigl( \int_{0}^{t}\Bigl\|\Bigl\langle e(s\land\theta),D_i(u\land\theta)\Bigl\rangle
\Bigl\|^{2}_{L^{\frac{p}{2}}}ds\Bigl)^{1/2}\notag\\
& \le \sup_{0\le s\le t}\|e(s\land\theta)\|_{L^{p}}\cdot p\sum_{i=1}^{m}\Bigl(\int_{0}^{t}\|D_i(u\land\theta)\|^{2}_{L^{p}}ds\Bigl)^{1/2}\notag\\
& \le\frac{1}{4}\sup_{0\le s\le t}\|e(s\land\theta)\|^{2}_{L^{p}}+2p^{2}m\bar{K}^{2}_{R}
\sum_{i=1}^{m}\int_{0}^{t}\|e(s\land\theta)\|^{2}_{L^{p}}ds\notag\\
&\quad+2p^{2}m\sum_{i=1}^{m}\int_{0}^{t}\|\tilde{R}_{1}(g_{i})\|^{2}_{L^{p}}ds.
\end{align}

At the same time, denoting $\kappa(t):=[\frac{t}{\Delta}]\Delta$, $$H(u):=f'(\bar{Y}(u))\sum_{j=1}^{m}\int_{\kappa(u)}^{u}\tilde{g}_{j}
(\bar{Y}(r))dB^{j}(r)$$ and replacing $\phi$ in (\ref{B15}) by $f$. We have
\begin{align}\label{C5}
&\quad2\Bigl\|\sup_{0\le s\le t}\int_{0}^{s\land\theta}\Bigl\langle e(u),f(Y(u))-f(\bar{Y}(u))\Bigl\rangle du\Bigl\|_{L^{p/2}}\notag\\
&=2\Bigl\|\sup_{0\le s\le t}\int_{0}^{s\land\theta}\Bigl\langle e(u),H(u)+\tilde{R}_{1}(f)\Bigl\rangle du\Bigl\|_{L^{p/2}}\notag\\
&\le J+2\Big\|\sup_{0\le s\le t}\int_{0}^{s\land\theta}\Bigl\langle e(u),\tilde{R}_{1}(f)\Bigl\rangle du\Bigl\|_{L^{p/2}}\notag\\
&\le J+\int_{0}^{t}\|e(s\land\theta)\|^{2}_{L^{p}}ds
+\int_{0}^{t}\|\tilde{R}_{1}(f)\|^{2}_{L^{p}}ds
\end{align}
where
\begin{equation}
J=2\Bigl\|\sup_{0\le s\le t}\int_{0}^{s\land\theta}\Bigl\langle e(u),H(u)\Bigl\rangle du\Bigl\|_{L^{p/2}}.
\end{equation}

Inserting (\ref{C4}) and (\ref{C5}) to (\ref{C3}) yields
\begin{align}
\frac{3}{4}\Bigl\|\sup_{0\le s\le t}|e(s\land\theta)|\Bigl\|^{2}_{L^{p}}&\le
2(\bar{K}_{R}+m\bar{K}_{R}^{2}+1+p^{2}m^{2}\bar{K}^{2}_{R})
\int_{0}^{t}\|e(s\land\theta)\|^{2}_{L^{p}}ds\notag\\
&\quad+2(1+p^{2}m)\sum_{i=1}^{m}\int_{0}^{t}
\|\tilde{R}_{1}(g_{i})\|^{2}_{L^{p}}ds+\int_{0}^{t}\|\tilde{R}_{1}(f)\|^{2}_{L^{p}}ds+J.
\end{align}

Therefore, it remains to estimate $J$. For $u\in (0, s\land\theta]$, (\ref{B1}), (\ref{B4}) and (\ref{B15}) implies
\begin{align}
e(u)&=e(\kappa(u))+\int_{\kappa(u)}^{u}[f(x(r))-\tilde{f}(\bar{Y}(r))]dr+\sum_{i=1}^{m}\int_{\kappa(u)}^{u}D_i(r)dB^{i}(r).\notag\\
&=\int_{\kappa(u)}^{u}[f(x(r))-f(x(\kappa(r))]dr+\sum_{i=1}^{m}\int_{\kappa(u)}^{u}[g_{i}(x(r))-g_{i}(Y(r))]dB^{i}(r)\notag\\
&\quad+\sum_{i=1}^{m}\int_{\kappa(u)}^{u}\tilde{R}_{1}(g_{i})dB^{i}(r)+(u-\kappa(u))
(f(x(\kappa(u)))-f(Y(\kappa(u)))+e(\kappa(u)).
\end{align}

Thus,
\begin{align*}
 J&\le2\Bigl\|\sup_{0\le s\le t}\int_{0}^{s\land\theta}\Bigl\langle \int_{\kappa(u)}^{u}[f(x(r))-f(x(\kappa(r))]dr,H(u)\Bigl\rangle du\Bigl\|_{L^{p/2}}\\
&\quad+2\Bigl\|\sup_{0\le s\le t}\int_{0}^{s\land\theta}\Bigl\langle \sum_{i=1}^{m}\int_{\kappa(u)}^{u}[g_{i}(x(r))-g_{i}(Y(r))]dB^{i}(r),H(u)\Bigl\rangle du\Bigl\|_{L^{p/2}}\\
&\quad+2\Bigl\|\sup_{0\le s\le t}\int_{0}^{s\land\theta}\Bigl\langle \sum_{i=1}^{m}\int_{\kappa(u)}^{u}\tilde{R}_{1}(g_{i})dB^{i}(r),H(u)\Bigl\rangle du\Bigl\|_{L^{p/2}}\\
&\quad+2\Bigl\|\sup_{0\le s\le t}\int_{0}^{s\land\theta}\Bigl\langle (u-\kappa(u))(f(x(\kappa(u)))-f(Y(\kappa(u))),H(u)\Bigl\rangle du\Bigl\|_{L^{p/2}}\\
&\quad+2\Bigl\|\sup_{0\le s\le t}\int_{0}^{s\land\theta}\Bigl\langle x(\kappa(u))-Y(\kappa(u)),H(u)\Bigl\rangle du\Bigl\|_{L^{p/2}}\\
&=: J_{1}+J_{2}+J_{3}+J_{4}+J_{5}.
\end{align*}

To begin our estimate, we show that
\begin{align}\label{C6}
&\quad\|x(t\land\theta)-x(\kappa(t\land\theta))\|_{L^{p}}\notag\\&
\le\int_{\kappa(t)}^{t}\|f(x(s\land\theta))\|_{L^{p}}ds
+\|\sum_{j=1}^{m}\int_{\kappa(t\land\theta)}^{t\land\theta}
g_{j}(x(s))dB^{j}(s)\|_{L^{p}}\notag\\
&\le\int_{\kappa(t)}^{t}(\bar{K}_{R}\|x(s\land\theta)\|_{L^{p}}
+\|f(0)\|_{L^{p}})ds
+\sum_{j=1}^{m}(\mathbb{E}|\int_{\kappa(t\land\theta)}^{t\land\theta}g_{j}(x(s))dB^{j}(s)|^{p})^{1/p}\notag\\
&\le C\bar{K}_{R}\Delta+C\sum_{j=1}^{m}\Bigl(\mathbb{E}\bigl(\int_{\kappa(t\land\theta)}^{t\land\theta}
|g_{j}(x(s))|^{2}ds\bigl)^{p/2}\Bigl)^{1/p}\notag\\
&\le C\bar{K}_{R}\Delta+C\sum_{j=1}^{m}\Bigl(\Delta^{\frac{p}{2}-1}\mathbb{E}\int_{\kappa(t\land\theta)}^{t\land\theta}|g_{j}(x(s))|^{p}ds\Bigl)^{1/p}\notag\\
&\le C\bar{K}_{R}\Delta+C\bar{K}_{R}\Delta^{\frac{1}{2}}\notag\\&
\le C\bar{K}_{R}\Delta^{\frac{1}{2}}.
\end{align}

Besides, for $u\le s\land\theta$, we can derive
\begin{align}\label{C7}
\|H(u)\|_{L^{p}}&\le K_{R}\|\sum_{j=1}^{m}\int_{\kappa(u)}^{u}\tilde{g}_{j}(\bar{Y}(r)))dB^{j}(r))\|_{L^{p}}\le CK_{R}\bar{K}_{R}\Delta^{\frac{1}{2}}.
\end{align}

For $J_{1}$, by (\ref{C6}), (\ref{C7}) and H\"{o}lder inequality, we have
\begin{align*}
J_{1}&\le 2\int_{0}^{t}\int_{\kappa(u)}^{u}\|f(x(r\land\theta))-f(x(\kappa(r\land\theta))\|_{L^{p}}
\cdot\|H(u)\|_{L^{p}}drdu\\
&\le2\int_{0}^{t}\int_{\kappa(u)}^{u}C\bar{K}^{2}_{R}\Delta^{\frac{1}{2}}\cdot CK_{R}\bar{K}_{R}\Delta^{\frac{1}{2}}du\\
&\le CK_{R}\bar{K}_{R}^{3}\Delta^{2}.
\end{align*}

For the term $J_{2}$, we use an elementary inequality, Lemma \ref{D1}, (\ref{C7}) and H\"{o}lder inequality to get
\begin{align*}
J_{2}&\le 2\int_{0}^{t}\|\sum_{i=1}^{m}\int_{\kappa(u\land\theta)}^{u\land\theta}[g_{i}(x(r))-g_{i}(Y(r))]dB^{i}(r) \|_{L^{p}}\cdot\|H(u\land\theta)\|_{L^{p}}du\\
&\le \frac{1}{\Delta}\int_{0}^{t}\|\sum_{i=1}^{m}
\int_{\kappa(u\land\theta)}^{u\land\theta}[g_{i}(x(r))-g_{i}(Y(r))]dB^{i}(r) \|^{2}_{L^{p}}du+\Delta\int_{0}^{t}\|H(u\land\theta)\|^{2}_{L^{p}}du\\
&\le\frac{p^{2}}{\Delta}\int_{0}^{t}\int_{\kappa(u)}^{u}
\sum_{i=1}^{m}\|g_{i}(x(r\land\theta))-g_{i}(Y(r\land\theta))\|^{2}_{L^{p}}drdu
+CK_{R}^{2}\bar{K}_{R}^{2}\Delta^{2}\\
&\le mp^{2}\bar{K}^{2}_{R}\int_{0}^{t}\sup_{0\le r\le u}\|e(r\land\theta)\|^{2}_{L^{p}}du+CK_{R}^{2}\bar{K}_{R}^{2}\Delta^{2}.
\end{align*}

By Lemma \ref{D1} and Lemma \ref{l1}, we derive that similarly as above
\begin{align*}
J_{3}&\le 2\int^{t}_{0}\|\sum_{i=1}^{m}\int_{\kappa(u\land\theta)}^{u\land\theta}\tilde{R}_{1}(g_{i})dB^{i}(r)
\|_{L^{p}}\cdot \|H(u\land\theta)\|_{L^{p}}du\\
&\le CK_{R}\bar{K}_{R}^{2}\Delta^{3/2}\cdot CK_{R}\bar{K}_{R}\Delta^{1/2}\\
&\le CK_{R}^{2}\bar{K}_{R}^{3}\Delta^{2}.
\end{align*}

For $J_{4}$, it's a little different as follows, and we always suppose that $u\le t\land\theta$ and denote $N_t=[t/\Delta]$ in the sequel
\begin{align*}
J_{4}&\le
2\Bigl\| \sup_{0\le s\le t}\Bigl|\sum_{k=0}^{N_s-1}\int_{t_{k}}^{t_{k+1}}\Bigl\langle (u-\kappa(u))(f(x(\kappa(u)))-f(Y(\kappa(u)))),H(u)\Bigl\rangle du\Bigl|\Bigl\|_{L^{p/2}}\\
&\quad+2\Bigl\| \sup_{0\le s\le t}\Bigl|\int_{\kappa(s)}^{s}\Bigl\langle (u-\kappa(u))(f(x(\kappa(u)))-f(Y(\kappa(u)))),H(u)\Bigl\rangle du\Bigl|\Bigl\|_{L^{p/2}}\\
&=: J_{41}+J_{42}.
\end{align*}

 It is easy to show that $Z_k:=\int_{t_{k}}^{t_{k+1}}\Bigl\langle (u-\kappa(u))(f(x(\kappa(u)))-f(Y(\kappa(u)))),H(u)\Bigl\rangle du$ satisfies $\mathbb{E}[Z_{k+1}|Z_{1},...,Z_{k}]=0$. For $p\ge 4$, applying Lemma \ref{D1} and H\"{o}lder inequality, we can get
\begin{align*}
J_{41}&\le\frac{2p}{p-2}\Bigl\|\sum_{k=0}^{N_{t}-1}\int_{t_{k}}^{t_{k+1}}\Bigl\langle (u-\kappa(u))(f(x(\kappa(u)))-f(Y(\kappa(u)))),H(u)\Bigl\rangle du\Bigl\|_{L^{p/2}}\\
&\le \frac{2pC_{p/2}}{p-2}\Bigl(\sum_{k=0}^{N_{t}-1}\Bigl\|\int_{t_{k}}^{t_{k+1}}\Bigl\langle (u-\kappa(u))(f(x(\kappa(u)))-f(Y(\kappa(u)))),H(u)\Bigl\rangle du\Bigl\|^{2}_{L^{p/2}}\Bigl)^{1/2}\\
&\le \frac{2pC_{p/2}}{p-2}\Bigl(\sum_{k=0}^{N_{t}-1}\Delta\int_{t_{k}}^{t_{k+1}}\Bigl\|\Bigl\langle (u-\kappa(u))(f(x(\kappa(u)))-f(Y(\kappa(u)))),H(u)
\Bigl\rangle\Bigl\|^{2}_{L^{p/2}}du\Bigl)^{1/2}.
\end{align*}

To estimate $J_{41}$, we firstly show that
\begin{align}\label{D3}
&\Bigl\|\Bigl\langle (u-\kappa(u))(f(x(\kappa(u)))-f(Y(\kappa(u)))),H(u)
\Bigl\rangle\Bigl\|^{2}_{L^{p/2}}\notag\\
&\le\Bigl\|(u-\kappa(u))(f(x(\kappa(u)))-f(Y(\kappa(u))))\Bigl\|^{2}_{L^{p}}
\cdot\Bigl\|H(u)\Bigl\|^{2}_{L^{p}}\notag\\
&\le C\bar{K}_{R}^{2}\Delta^{2}\cdot CK_{R}^{2}\bar{K}^{2}_{R}\Delta\le CK_{R}^{2}\bar{K}_{R}^{4}\Delta^{3}.
\end{align}

Thus,
\begin{align*}
J_{41}\le CK_{R}\bar{K}_{R}^{2}\Delta^{2}.
\end{align*}

As for $J_{42}$, by Lemma \ref{D2}, (\ref{D3}) and H\"{o}lder inequality, we obtain
\begin{align*}
\bigl(\frac{J_{42}}{2}\bigl)^{p/2}&\le \Delta^{\frac{p}{2}-1}\mathbb{E}\Bigl(\sup_{0\le s\le t}\int_{\kappa(s)}^{s}\Bigl|\Bigl\langle (u-\kappa(u))(f(x(\kappa(u)))-f(Y(\kappa(u)))),H(u)\Bigl\rangle\Bigl|^{p/2}du\Bigl)\\
&\le \Delta^{\frac{p}{2}-1}\mathbb{E}\Bigl(\sum_{k=0}^{N_{t}-1}\int_{t_{k}}^{t_{k+1}}\Bigl|\Bigl\langle (u-\kappa(u))(f(x(\kappa(u)))-f(Y(\kappa(u)))),H(u)\Bigl\rangle\Bigl|^{p/2}du\\
&\quad+\int_{\kappa(t)}^{t}\Bigl|\Bigl\langle (u-\kappa(u))(f(x(\kappa(u)))-f(Y(\kappa(u)))),H(u)\Bigl\rangle\Bigl|^{p/2}du\Bigl)\\
&=\Delta^{\frac{p}{2}-1}\int_{0}^{t}\mathbb{E}\Bigl|\Bigl\langle (u-\kappa(u))(f(x(\kappa(u)))-f(Y(\kappa(u)))),H(u)\Bigl\rangle\Bigl|^{p/2}du\\
&\le CK_{R}^{p/2}\bar{K}_{R}\Delta^{5p/4-1}.
\end{align*}
Thus
\begin{align*}
J_{42}\le CK_{R}\bar{K}_{R}^{2}\Delta^{\frac{5}{2}-\frac{2}{p}}\le CK_{R}\bar{K}_{R}^{2}\Delta^{2}.
\end{align*}
Then
\begin{align*}
J_{4}\le CK_{R}\bar{K}_{R}^{2}\Delta^{2}.
\end{align*}
Similarly, we divide $J_{5}$ as follows:
\begin{align*}
J_{5}&\le 2\Bigl\|\sup_{0\le s\le t}\Bigl|\sum_{k=0}^{N_{s}-1}\int_{t_{k}}^{t_{k+1}}\langle x(\kappa(u\land\theta))-Y(\kappa(u\land\theta)),H(u\land\theta)\rangle du\Bigl|\Bigl\|_{L^{p/2}}\\
&\quad+2\Bigl\|\sup_{0\le s\le t}\Bigl|\int_{\kappa(s)}^{s}\langle x(\kappa(u\land\theta))-Y(\kappa(u\land\theta)),H(u\land\theta)\rangle du\Bigl|\Bigl\|_{L^{p/2}}\\
&=:J_{51}+J_{52}.
\end{align*}

For $J_{51}$, we follow the same line as $J_{41}$ to derive
\begin{align*}
J_{51}&\le\frac{2pC_{p/2}}{p-2}\Bigl(\sum_{k=0}^{N_{t}-1}\Delta\int_{t_{k}}^{t_{k+1}}
\Bigl\|x(\kappa(u\land\theta))-Y(\kappa(u\land\theta))\Bigl\|^{2}_{L^{p}}\cdot
\Bigl\|H(u\land\theta)\Bigl\rangle\Bigl\|^{2}_{L^{p}}du\Bigl)^{1/2}\\
&\le\sup_{0\le s\le t}\|e(s\land\theta)\|_{L^{p}}\cdot\frac{2pC_{p/2}}{p-2}\Bigl(\sum_{k=0}^{N_{t}-1}
\Delta\int_{t_{k}}^{t_{k+1}}\Bigl\|H(u\land\theta)\Bigl\|^{2}_{L^{p}}du\Bigl)^{1/2}\\
&\le \frac{1}{4}\sup_{0\le s\le t}\|e(s\land\theta)\|_{L^{p}}^{2}+\frac{4p^{2}C^{2}_{p/2}}{(p-2)^{2}}
\sum_{k=0}^{N_{t}-1}\Delta\int_{t_{k}}^{t_{k+1}}\Bigl\|H(u\land\theta)\Bigl\|^{2}_{L^{p}}du\\
&\le \frac{1}{4}\sup_{0\le s\le t}\|e(s\land\theta)\|_{L^{p}}^{2}+CK_{R}^{2}\bar{K}_{R}^{2}\Delta^{2}.
\end{align*}

With regard to $J_{52}$, we use the same way as $J_{42}$ to yield
\begin{align*}
\bigl(\frac{J_{52}}{2}\bigl)^{p/2}&\le\Delta^{\frac{p}{2}-1}
\int_{0}^{t}\|x(\kappa(u\land\theta))-Y(\kappa(u\land\theta))\|^{p/2}_{L^{p}}\cdot\|H(u\land\theta)\|^{p/2}_{L^{p}}du\\
&\le \sup_{0\le s\le t}\|e(s\land\theta)\|^{p/2}_{L^{p}}\cdot CK_{R}^{p/2}\bar{K}_{R}^{p/2}\Delta^{\frac{3p}{4}-1}.
\end{align*}
Thus
\begin{align*}
J_{52}&\le 2\sup_{0\le s\le t}\|e(s\land\theta)\|_{L^{p}}\cdot C\bar{K}_{R}K_{R}\Delta^{(3p-4)/2p}\\
&\le\frac{1}{4}\sup_{0\le s\le t}\|e(s\land\theta)\|_{L^{p}}^{2}+CK_{R}^{2}\bar{K}_{R}^{2}\Delta^{(3p-4)/p}.
\end{align*}

Note that $(3p-4)/p\ge 2$ for $p\ge 4$, we have
\begin{align*}
J_{5}\le \frac{1}{2}\sup_{0\le s\le t}\|e(s\land\theta)\|_{L^{p}}^{2}+CK_{R}^{2}\bar{K}_{R}^{2}\Delta^{2}.
\end{align*}

Therefore, we can get the following estimate of $J$
\begin{align*}
J\le mp^{2}\bar{K}_{R}^{2}\int_{0}^{t}\sup_{0\le r\le u}\|e(u\land\theta)\|_{L^{p}}^{2}du+\frac{1}{2}\sup_{0\le s\le t}\|e(s\land\theta)\|_{L^{p}}^{2}+CK_{R}^{2}\bar{K}_{R}^{3}\Delta^{2}.
\end{align*}
Hence, we have
\begin{align*}
&\frac{3}{4}\Bigl\|\sup_{0\le s\le t}|e(s\land\theta)|\Bigl\|^{2}_{L^{p}}\le C\bar{K}_{R}^{2}\int_{0}^{t}\|e(s\land\theta)\|^{2}_{L^{p}}ds+CK_{R}^{2}\bar{K}_{R}^{4}\Delta^{2}\\
&+mp^{2}\bar{K}_{R}^{2}\int_{0}^{t}\sup_{0\le r\le u}\|e(u\land\theta)\|_{L^{p}}^{2}du+\frac{1}{2}\sup_{0\le s\le t}\|e(s\land\theta)\|_{L^{p}}^{2}+CK_{R}^{2}\bar{K}_{R}^{3}\Delta^{2}.
\end{align*}
Since $K_R\le \sqrt{\bar{K}_R}$ by Lemma \ref{l0}, Gronwall's inequality yields
\begin{align*}
\Bigl\|\sup_{0\le s\le t}|e(s\land\theta)|\Bigl\|_{L^{p}}\le C K_{R}\bar{K}_{R}^{2}\exp(C\bar{K}^{2}_{R})\cdot\Delta\le C \bar{K}_{R}^{5/2}\exp(C\bar{K}^{2}_{R})\cdot\Delta.
\end{align*}
Without loss of generality, suppose $\Delta^{\ast}$ is sufficiently small such that
\begin{align*}
h(\Delta^{\ast})\ge \bar{K}^{-1}\left(\bar{K}_{R}\exp(2C\bar{K}^{2}_{R}/5)\right)\ge R.
\end{align*}
Here, $\bar{K}^{-1}$ is the inverse of $\bar{K}_{R}$ as a function of $R.$

Then we can get
\begin{align*}
\Bigl\|\sup_{0\le s\le t}|e(s\land\theta)|\Bigl\|_{L^{p}}\le C\bar{K}^{5/2}_{h(\Delta)}\Delta.
\end{align*}
We complete the proof.
\end{proof}
\end{Theorem}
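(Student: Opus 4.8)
The plan is to track the difference $e(t)=x(t)-Y(t)$ up to the stopping time $\theta=\tau_R\wedge\rho_R$, on which both $x$ and the interpolated scheme stay below $R\le h(\Delta)$, so that all the modified truncated functions $\tilde f,\tilde g,L^{j}\tilde g_i$ coincide with the untruncated $f,g,L^jg_i$ and Lemma \ref{l0} supplies the local Lipschitz bounds with constant $\bar K_R$. First I would write the integral equation \eqref{B21} for $e$ and apply It\^o's formula to $|e(t\wedge\theta)|^2$, as in \eqref{B22}. The drift inner product splits via \eqref{C1} into a term $\bar K_R|e(s)|^2$ plus $\langle e(s),f(Y(s))-f(\bar Y(s))\rangle$; the latter is the genuinely dangerous term because the one-step deviation $Y(s)-\bar Y(s)$ is only of size $\Delta^{1/2}$, so a crude $|e|\cdot|Y-\bar Y|$ estimate would cost a factor $\Delta^{1/2}$ and destroy the order-one rate. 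The resolution is the Taylor expansion \eqref{B15}--\eqref{B18}: $f(Y)-f(\bar Y)=H(u)+\tilde R_1(f)$, where $H(u)=f'(\bar Y(u))\sum_j\int_{\kappa(u)}^u\tilde g_j(\bar Y(r))dB^j(r)$ is a martingale-type increment with mean zero over each step, and by Lemma \ref{l1} the remainder $\tilde R_1(f)$ is already $O(\Delta)$ in $L^p$. Likewise in the diffusion term \eqref{C2}, $D_i(s)=g_i(x(s))-g_i(Y(s))+\tilde R_1(g_i)$, so the Milstein correction exactly cancels the first-order Brownian term and leaves $2\bar K_R^2|e|^2+2|\tilde R_1(g_i)|^2$.

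Next I would pass to $L^p$ ($p\ge4$, so $p/2\ge2$) and take $\sup_{0\le s\le t}$: the stochastic integral $\sum_i\int_0^{s\wedge\theta}\langle e(u),D_i(u)\rangle dB^i$ is handled by Lemma \ref{D2} and Young's inequality, absorbing a $\tfrac14\|\sup e\|_{L^p}^2$ into the left side \eqref{C4}; the drift piece involving $\tilde R_1(f)$ is likewise bounded by $\int_0^t\|e(s\wedge\theta)\|_{L^p}^2ds+\int_0^t\|\tilde R_1(f)\|_{L^p}^2ds$ \eqref{C5}. This leaves the principal quantity $J=2\|\sup_s\int_0^{s\wedge\theta}\langle e(u),H(u)\rangle du\|_{L^{p/2}}$, and here is where the bulk of the work lies. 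The trick is to expand $e(u)$ itself over the last mesh interval: $e(u)=e(\kappa(u))+\int_{\kappa(u)}^u(\cdots)dr+\sum_i\int_{\kappa(u)}^u D_i(r)dB^i(r)$, rewritten (using $x(r)-\tilde f(\bar Y(r))$ decompositions) as five pieces, giving $J\le J_1+\cdots+J_5$. Because $\|H(u)\|_{L^p}\le CK_R\bar K_R\Delta^{1/2}$ \eqref{C7} and the various increments of $x$ are $O(\bar K_R\Delta^{1/2})$ \eqref{C6}, the ``Lebesgue'' pieces $J_1,J_3$ are immediately $O(\Delta^{3/2}\cdot\Delta^{1/2})=O(\Delta^2)$; $J_2$ produces a Gronwall-type term $mp^2\bar K_R^2\int_0^t\sup_{r\le u}\|e(r\wedge\theta)\|_{L^p}^2du$ plus $O(\Delta^2)$ after splitting $\tfrac1\Delta$ vs $\Delta$ by Young.

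The subtle pieces are $J_4$ and $J_5$, where the integrand $\langle x(\kappa(u))-Y(\kappa(u)),H(u)\rangle$ (resp.\ with the drift difference) is $\mathcal F_{\kappa(u)}$-measurable against the mean-zero increment $H(u)$; so I would split each into a sum over \emph{complete} intervals $[t_k,t_{k+1})$ plus one incomplete tail. On the complete-interval sum the terms $Z_k:=\int_{t_k}^{t_{k+1}}\langle\cdots,H(u)\rangle du$ form a martingale difference sequence ($\mathbb E[Z_{k+1}\mid Z_1,\dots,Z_k]=0$), so Lemma \ref{D1} gives a square-function bound; together with $\|x(\kappa)-Y(\kappa)\|_{L^p}\le\sup_s\|e(s\wedge\theta)\|_{L^p}$ and Young, $J_{41},J_{51}$ contribute $\tfrac14\|\sup e\|_{L^p}^2+O(\Delta^2)$, and the tails $J_{42},J_{52}$ are estimated by Lemma \ref{D2} with an explicit $\Delta^{p/2-1}$ from H\"older, yielding powers $\ge2$ of $\Delta$ for $p\ge4$. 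Collecting everything, and absorbing the total $\tfrac14+\tfrac14+\tfrac14$ of $\|\sup e\|_{L^p}^2$ from the left, one arrives at
\begin{align*}
\Bigl\|\sup_{0\le s\le t}|e(s\wedge\theta)|\Bigr\|_{L^p}^2\le C\bar K_R^2\int_0^t\sup_{0\le r\le u}\|e(r\wedge\theta)\|_{L^p}^2\,du+CK_R^2\bar K_R^4\Delta^2,
\end{align*}
and Gronwall's inequality (using $K_R\le\sqrt{\bar K_R}$ from Lemma \ref{l0}) gives $\|\sup_s|e(s\wedge\theta)|\|_{L^p}\le C\bar K_R^{5/2}\exp(C\bar K_R^2)\Delta$. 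Finally, by choosing $\Delta^*$ small enough that $h(\Delta^*)\ge\bar K^{-1}(\bar K_R\exp(2C\bar K_R^2/5))\ge R$ (possible since $h(\Delta)\to\infty$), the exponential factor is dominated and one obtains the stated bound $C\bar K_{h(\Delta)}^{5/2}\Delta$. I expect the main obstacle to be organizing the $J$-decomposition cleanly — in particular getting the martingale-difference structure of $J_4,J_5$ right and making sure every leftover multiple of $\|\sup e\|_{L^p}^2$ is small enough to be absorbed, while keeping all remaining terms at order $\Delta^2$.
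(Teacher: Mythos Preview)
Your proposal is correct and follows essentially the same route as the paper: the It\^o expansion \eqref{B22}, the Taylor splitting $f(Y)-f(\bar Y)=H+\tilde R_1(f)$ and $D_i=g_i(x)-g_i(Y)+\tilde R_1(g_i)$, the five-term decomposition of $J$ via $e(u)=e(\kappa(u))+\cdots$, and the martingale-difference treatment of $J_4,J_5$ through Lemma~\ref{D1} are all exactly what the paper does. The only discrepancies are bookkeeping (in the paper $J_{41}$ is bounded outright by $CK_R\bar K_R^2\Delta^2$ using the uniform moment bound on $e$, so only $J_{51}$ and $J_{52}$ each contribute a $\tfrac14\|\sup e\|_{L^p}^2$ to be absorbed, together with the $\tfrac14$ from \eqref{C4}); this does not affect the argument.
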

\begin{Theorem}\label{N95}
Let Assumptions $\ref{A3}$ and $\ref{A2}$ hold for some $p>4$. If there exists a $\Delta^{\ast}$ sufficiently small such that $(\ref{B8})$ and
\begin{equation}
h(\Delta)\ge (\Delta^{q}(\bar{K}_{h(\Delta)})^{5q/2}))^{-\frac{1}{p-q}}
\end{equation}
hold for all $\Delta\in(0,\Delta^{\ast}]$, then for any fixed $T>0$ and $q<p$,
\begin{equation}
\|x(T)-Y(T)\|_{L^{q}}\le C\bar{K}^{5/2}_{h(\Delta)}\Delta
\end{equation}
holds, where C is a positive constant independent of $\Delta$.
\end{Theorem}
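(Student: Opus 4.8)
The plan is to remove the stopping time $\theta=\tau_R\land\rho_R$ from Theorem \ref{A12} by the usual localization together with H\"older's inequality, the crucial point being the $\Delta$-dependent choice $R=h(\Delta)$. First I would fix $T>0$ and $q<p$, and shrink $\Delta^{\ast}$ if necessary so that $h(\Delta)>|x_0|$ for every $\Delta\in(0,\Delta^{\ast}]$; this is possible by $(\ref{B8})$ and ensures that $R:=h(\Delta)$ is an admissible radius in Lemmas \ref{A11} and \ref{A21} and in Theorem \ref{A12}, for which then $\bar K_R=\bar K_{h(\Delta)}$. Writing $e(t)=x(t)-Y(t)$, I would decompose
\begin{align*}
\mathbb{E}|x(T)-Y(T)|^{q}=\mathbb{E}\big[|e(T)|^{q}\mathbf{1}_{\{\theta>T\}}\big]+\mathbb{E}\big[|e(T)|^{q}\mathbf{1}_{\{\theta\le T\}}\big].
\end{align*}

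On $\{\theta>T\}$ one has $T\land\theta=T$, so $e(T)=e(T\land\theta)$, and Theorem \ref{A12} (with $R=h(\Delta)$) combined with Lyapunov's inequality (since $q<p$) gives
\begin{align*}
\mathbb{E}\big[|e(T)|^{q}\mathbf{1}_{\{\theta>T\}}\big]\le\big(\mathbb{E}|e(T\land\theta)|^{p}\big)^{q/p}\le\big(C\bar K_{h(\Delta)}^{5/2}\Delta\big)^{q}.
\end{align*}
On $\{\theta\le T\}$ I would use H\"older's inequality with exponents $p/q$ and $p/(p-q)$:
\begin{align*}
\mathbb{E}\big[|e(T)|^{q}\mathbf{1}_{\{\theta\le T\}}\big]\le\big(\mathbb{E}|e(T)|^{p}\big)^{q/p}\big(\mathbb{P}(\theta\le T)\big)^{(p-q)/p}.
\end{align*}
The first factor is bounded uniformly in $\Delta$ because $\mathbb{E}|e(T)|^{p}\le 2^{p-1}\big(\mathbb{E}|x(T)|^{p}+\mathbb{E}|Y(T)|^{p}\big)\le C$ by the moment bound $(\ref{B16})$ and Lemma \ref{A6}; the second satisfies $\mathbb{P}(\theta\le T)\le\mathbb{P}(\tau_R\le T)+\mathbb{P}(\rho_R\le T)\le C/R^{p}=C/h(\Delta)^{p}$ by Lemmas \ref{A11} and \ref{A21}. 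Hence this contribution is at most $C\,h(\Delta)^{-(p-q)}$.

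It then remains to reconcile the two bounds, and the growth hypothesis on $h$ is calibrated to do exactly that: $h(\Delta)\ge(\Delta^{q}\bar K_{h(\Delta)}^{5q/2})^{-1/(p-q)}$ is equivalent to $h(\Delta)^{-(p-q)}\le\Delta^{q}\bar K_{h(\Delta)}^{5q/2}=(\Delta\bar K_{h(\Delta)}^{5/2})^{q}$, so the $\{\theta\le T\}$ term is also $\le C(\Delta\bar K_{h(\Delta)}^{5/2})^{q}$. Adding the two contributions and taking $q$-th roots yields $\|x(T)-Y(T)\|_{L^{q}}\le C\bar K_{h(\Delta)}^{5/2}\Delta$. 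I do not expect a genuine obstacle here — the argument is a routine localization — the only delicate point being the competing demands on $R$: it must be small enough ($R\le h(\Delta)$) for Theorem \ref{A12} to apply with the favourable constant $\bar K_{h(\Delta)}^{5/2}$, yet large enough that the tail $\mathbb{P}(\theta\le T)=O(R^{-p})$ is absorbed; the choice $R=h(\Delta)$ together with the prescribed growth of $h$ is precisely what makes both hold, while the assumption $p>4$ (stronger than the $p\ge4$ needed for Theorem \ref{A12}) simply provides the strict slack $q<p$ used in the H\"older split and the Lyapunov step.
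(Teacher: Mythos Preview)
Your proposal is correct and follows the standard localization argument that the paper defers to \cite{AC}: split according to $\{\theta>T\}$ versus $\{\theta\le T\}$, invoke Theorem \ref{A12} on the former, and control the latter via the uniform $p$th-moment bounds together with the tail estimates of Lemmas \ref{A11}--\ref{A21} and the growth hypothesis on $h$. The only cosmetic difference is that the paper treats $q\ge4$ first and then deduces $q<4$ by Jensen, whereas you handle all $q<p$ at once via the Lyapunov step; the two routes are equivalent here.
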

For $q\ge4,$ the proof is similar to Theorem in \cite{AC}. And Jensen's inequality implies the required result for $q<4$, so we omit it here.

\section{ Exponential stability of modified truncated Milstein method}

To investigate the exponential stability of the modified truncated EM method, the following assumption is required throughout this Section.
\begin{Assumption}
Assume that $f(0)=0, g(0)=0$, and the coefficients satisfy the local Lipschitz condition, i.e., for each $R>0$ there is $K_{R}>0$ (depending on $R$) such that
\begin{equation}\label{E1}
|f(x)-f(y)|\vee|g_{j}(x)-g_{j}(y)|\vee|L^{j_{1}}g_{j_{2}}(x)-L^{j_{1}}g_{j_{2}}(y)|\le K_{R}|x-y|
\end{equation}
for all $|x|\vee|y|\le R$ and $j,j_{1},j_{2}=1,2,...,m$.
\end{Assumption}
\begin{Assumption}\label{E2}
There exists a pair of positive constants $\lambda$ and $p$ such that
\begin{equation}\label{E21}
\langle x,f(x)\rangle + \frac{p-1}{2}\sum_{j=1}^{m}|g_{j}(x)|^{2}\le -\lambda|x|^{2}.
\end{equation}
\end{Assumption}
In addition, we choose $h(\Delta)$ such that $$\lim_{\Delta\rightarrow 0}K^{6}_{h(\Delta)}\Delta=0.$$

As interpreted in \cite{AC}, such $h(\Delta)$ always exists.

For the modified functions $\tilde{f}$ and $\tilde{g}$, we have
\begin{Lemma}
Suppose $(\ref{E21})$ for holds for $p> 0$. Then we have
\begin{equation}
\langle x,\tilde{f}(x)\rangle+\frac{p-1}{2}\sum_{j=1}^{m}|\tilde{g}_{j}(x)|^{2}\le -\lambda|x|^{2}.
\end{equation}
\end{Lemma}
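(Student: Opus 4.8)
The plan is to mimic the structure of Lemma \ref{A5}, whose proof for the Khasminskii-type inequality $\langle x,\tilde f(x)\rangle+\frac{p-1}{2}\sum_j|\tilde g_j(x)|^2\le 2K(1+|x|^2)$ was borrowed from Lemma 3.1 of \cite{AC}; here the target is the strictly dissipative bound $-\lambda|x|^2$, and the argument should be even cleaner because the right-hand side is homogeneous of degree $2$ in $x$ and there is no additive constant. First I would split into the trivial case $|x|\le h(\Delta)$, where $\tilde f(x)=f(x)$ and $\tilde g_j(x)=g_j(x)$, so the conclusion is exactly Assumption \ref{E2}. The substance is the case $|x|>h(\Delta)$.

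For $|x|>h(\Delta)$, write $z:=\frac{x}{|x|}h(\Delta)$, so $|z|=h(\Delta)$ and $x=\frac{|x|}{h(\Delta)}z$, hence $\tilde f(x)=\frac{|x|}{h(\Delta)}f(z)$ and $\tilde g_j(x)=\frac{|x|}{h(\Delta)}g_j(z)$. Then
\begin{align*}
\langle x,\tilde f(x)\rangle+\frac{p-1}{2}\sum_{j=1}^{m}|\tilde g_j(x)|^2
&=\frac{|x|}{h(\Delta)}\Bigl\langle x,f(z)\Bigr\rangle+\frac{p-1}{2}\cdot\frac{|x|^2}{h(\Delta)^2}\sum_{j=1}^{m}|g_j(z)|^2\\
&=\frac{|x|^2}{h(\Delta)^2}\Bigl(\langle z,f(z)\rangle+\frac{p-1}{2}\sum_{j=1}^{m}|g_j(z)|^2\Bigr),
\end{align*}
where in the last step I used $\frac{|x|}{h(\Delta)}\langle x,f(z)\rangle=\frac{|x|^2}{h(\Delta)^2}\langle z,f(z)\rangle$ since $x=\frac{|x|}{h(\Delta)}z$. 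Now apply Assumption \ref{E2} to the point $z$ (which is a legitimate point of $\mathbb R^d$) to bound the bracket by $-\lambda|z|^2=-\lambda h(\Delta)^2$. Multiplying by the nonnegative factor $\frac{|x|^2}{h(\Delta)^2}$ preserves the inequality, giving $-\lambda h(\Delta)^2\cdot\frac{|x|^2}{h(\Delta)^2}=-\lambda|x|^2$, which is precisely the claim. The degenerate sub-case $x=0$ is covered by the first case since $0\le h(\Delta)$, and the convention $\frac{x}{|x|}=0$ at $x=0$ is never invoked in the regime $|x|>h(\Delta)$.

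There is essentially no obstacle: the only thing to be slightly careful about is the bookkeeping of the homogeneity degrees, namely checking that the $\frac{|x|}{h(\Delta)}$ from $\tilde f$ together with the $x$ in the inner product reproduces exactly the same scalar $\frac{|x|^2}{h(\Delta)^2}$ that multiplies the diffusion term, so that the whole left-hand side factors cleanly as (positive scalar)$\times$(the quantity controlled by \ref{E21} at $z$). I would also remark that the hypothesis is stated for $p>0$, and indeed no restriction $p>2$ is needed here: the computation above uses only that $\frac{p-1}{2}$ is the same constant on both sides, so the lemma holds verbatim for every $p$ for which \ref{E21} is assumed. This parallels Lemma 3.1 of \cite{AC}, so if desired one can simply cite that reference for the analogous dissipative statement instead of reproducing the two-line computation.
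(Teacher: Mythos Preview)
Your proof is correct and is exactly the argument the paper has in mind: the paper does not write out a proof but simply cites Lemma~2.2 of \cite{AC}, whose content is precisely the homogeneity computation you carry out (factor out $\frac{|x|^2}{h(\Delta)^2}$ and apply the assumption at $z=\frac{x}{|x|}h(\Delta)$). Your remark that no restriction on $p$ is needed, and your suggestion to cite \cite{AC} directly, match the paper's treatment.
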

For the proof, see Lemma 2.2 in \cite{AC}.
\begin{Theorem}\label{E3}
Suppose Assumptions $\ref{E1}$, $\ref{E2}$ hold. Then for any sufficiently small $\varepsilon\in (0,1)$, there exists $\Delta^{\ast}>0$ such that for any $\Delta\in (0,\Delta^{\ast})$ the discrete approximation satisfies
\begin{equation}\label{E31}
\lim_{k\rightarrow \infty}\sup\frac{\log\mathbb{E}(|Y_{k}|^{p})}{k\Delta}\le -p(\lambda-\varepsilon).
\end{equation}
For the continuous-times approximation, we also have
\begin{equation}\label{E32}
\lim_{t\rightarrow \infty}\sup\frac{\log\mathbb{E}(|Y(t)|^{p})}{t}\le -p(\lambda-\varepsilon).
\end{equation}
\end{Theorem}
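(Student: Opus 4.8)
\textbf{Proof proposal for Theorem \ref{E3}.}

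The plan is to establish the discrete-time estimate \eqref{E31} first by a one-step analysis of $\mathbb{E}|Y_{k+1}|^p$ in terms of $\mathbb{E}|Y_k|^p$, and then transfer the result to the continuous-time version \eqref{E32}. I would start by writing the scheme increment as $Y_{k+1}=Y_k+\tilde f(Y_k)\Delta+\xi_k$, where $\xi_k$ collects the diffusion term $\sum_j\tilde g_j(Y_k)\Delta B_k^j$ together with the Milstein correction terms. Using the elementary inequality $|a+b|^p\le |a|^p(1+\text{small})+C|b|^p(\ldots)$ — more precisely the expansion $|Y_{k+1}|^2=|Y_k|^2+2\Delta\langle Y_k,\tilde f(Y_k)\rangle+2\langle Y_k,\xi_k\rangle+|\tilde f(Y_k)\Delta+\xi_k|^2$ — and then raising to the power $p/2$, I would take conditional expectation $\mathbb{E}[\,\cdot\mid\mathscr{F}_{t_k}]$. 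The odd-order Brownian terms vanish in expectation; the key contribution $2\Delta\langle Y_k,\tilde f(Y_k)\rangle+(p-1)\Delta\sum_j|\tilde g_j(Y_k)|^2$ is controlled by the truncated Khasminskii-type bound (the Lemma just above the theorem) to produce the crucial factor $-2\lambda\Delta|Y_k|^2$. The remaining terms are of order $\Delta^{3/2}$ or higher in the relevant norm, and since $|Y_k|\le h(\Delta)$-scaling is enforced by the truncation, their contribution is bounded using the local Lipschitz constant $K_{h(\Delta)}$ together with $f(0)=g(0)=0$, so that they are $O(K_{h(\Delta)}^{c}\Delta^{3/2}|Y_k|^p)$ for some fixed power $c$.

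The outcome of the one-step estimate should be an inequality of the form
\begin{equation*}
\mathbb{E}|Y_{k+1}|^p\le \bigl(1-p\lambda\Delta+C K_{h(\Delta)}^{6}\Delta^{2}+o(\Delta)\bigr)\,\mathbb{E}|Y_k|^p,
\end{equation*}
where the higher-order corrections are absorbed using the standing choice $\lim_{\Delta\to0}K^6_{h(\Delta)}\Delta=0$. Given $\varepsilon\in(0,1)$, choose $\Delta^*$ small enough that $1-p\lambda\Delta+(\text{corrections})\le 1-p(\lambda-\varepsilon)\Delta\le e^{-p(\lambda-\varepsilon)\Delta}$ for all $\Delta\in(0,\Delta^*)$. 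Iterating gives $\mathbb{E}|Y_k|^p\le e^{-p(\lambda-\varepsilon)k\Delta}\mathbb{E}|Y_0|^p$, and taking logarithms, dividing by $k\Delta$, and letting $k\to\infty$ yields \eqref{E31}.

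For the continuous-time bound \eqref{E32}, I would use the continuous interpolation \eqref{B4} and Itô's formula on an interval $[t_k,t_k+s]$ with $s\le\Delta$ to estimate $\mathbb{E}|Y(t)|^p$ in terms of $\mathbb{E}|Y_k|^p=\mathbb{E}|\bar Y(t_k)|^p$, obtaining $\sup_{t_k\le t<t_{k+1}}\mathbb{E}|Y(t)|^p\le (1+C K_{h(\Delta)}^{c}\Delta)\mathbb{E}|Y_k|^p$; combined with \eqref{E31} and writing $t=k\Delta+r$ with $r\in[0,\Delta)$, the factor $(1+CK_{h(\Delta)}^c\Delta)$ is harmless in the limit $t\to\infty$, so \eqref{E32} follows with the same rate. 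The main obstacle is the bookkeeping in the one-step estimate: one must carefully expand $|Y_{k+1}|^p$ for general (non-integer) $p>4$ — this typically requires the inequality $(1+u)^{p/2}\le 1+\tfrac{p}{2}u+C_p(u^2+|u|^{p/2})$ applied with $u=(|Y_{k+1}|^2-|Y_k|^2)/|Y_k|^2$ on the event $\{Y_k\ne0\}$ — and then show every error term carries at least a full extra power of $K_{h(\Delta)}^6\Delta$ relative to the main term, so that the decay rate $p\lambda\Delta$ is not destroyed. Keeping track of which power of $K_{h(\Delta)}$ appears (and verifying it does not exceed $6$, matching the hypothesis $\lim_{\Delta\to0}K_{h(\Delta)}^6\Delta=0$) is the delicate part; the use of $f(0)=g(0)=0$ to convert local Lipschitz bounds into linear growth bounds $|\tilde f(x)|\le dK_{h(\Delta)}|x|$ is what makes all error terms proportional to $\mathbb{E}|Y_k|^p$ rather than additive constants, which is essential for the multiplicative recursion to close.
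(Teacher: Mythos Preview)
Your proposal is correct and follows essentially the same route as the paper: write $|Y_{k+1}|^{2}=|Y_k|^{2}(1+\xi_k)$, expand $(1+\xi_k)^{p/2}$ (the paper uses a third-order Taylor remainder, which amounts to your inequality $(1+u)^{p/2}\le 1+\tfrac{p}{2}u+C_p(u^{2}+|u|^{p/2})$), take conditional expectation so that the linear term and the quadratic term combine via the truncated Khasminskii bound to produce $-p\lambda\Delta$, and check every remaining contribution is $o(\Delta)$ thanks to $K_{h(\Delta)}^{6}\Delta\to 0$; the continuous-time estimate is then obtained exactly as you describe. One small refinement to note when you carry it out: the dominant higher-order error is not of the uniform shape $K_{h(\Delta)}^{6}\Delta^{2}$ but rather a mixture such as $K_{h(\Delta)}^{2}\Delta^{2}$, $K_{h(\Delta)}^{3}\Delta^{3/2}$ and $K_{h(\Delta)}^{p/2}\Delta^{p/4}$, and verifying the last of these is $o(\Delta)$ forces $p\ge 6$ rather than merely $p>4$.
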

\begin{proof}
For any $k\ge 0$, we have
\begin{align*}
|Y_{k+1}|^{2}&=|Y_{k}|^{2}+2\langle Y_{k},\tilde{f}(Y_{k})\Delta+\sum_{j=1}^{m}\tilde{g}_{j}(Y_{k})\Delta B^{j}_{k}+A_{k}\rangle+|\tilde{f}(Y_{k})\Delta+\sum_{j=1}^{m}\tilde{g}_{j}(Y_{k})\Delta B^{j}_{k}+A_{k}|^{2}\\
&=|Y_{k}|^{2}(1+\xi_{k})
\end{align*}
where
\begin{align*}
A_{k}=\frac{1}{2}\sum_{j_{1}=1}^{m}\sum_{j_{2}=1}^{m}L^{j_{1}}\tilde{g}_{j_{2}}(Y_{k})\Delta B^{j_{2}}_{k}\Delta B^{j_{1}}_{k}-\frac{1}{2}\sum_{j=1}^{m}L^{j}\tilde{g}_{j}(Y_{k})\Delta
\end{align*}
and
\begin{align*}
\xi_{k}=\frac{1}{|Y_{k}|^{2}}(2\langle Y_{k},\tilde{f}(Y_{k})\Delta+\sum_{j=1}^{m}\tilde{g}_{j}(Y_{k})\Delta B^{j}_{k}+A_{k}\rangle+|\tilde{f}(Y_{k})\Delta+\sum_{j=1}^{m}\tilde{g}_{j}(Y_{k})\Delta B^{j}_{k}+A_{k}|^{2})
\end{align*}
if $Y_{k}\ne 0$, otherwise, it is set to $-1$. So we only need to suppose that $Y_{k}\ne 0$.\\
Then we have
\begin{align*}
|Y_{k+1}|^{p}=|Y_{k}|^{p}(1+\xi_{k})^{\frac{p}{2}}.
\end{align*}
By Taylor's expansion,
\begin{align*}
(1+x)^{\frac{p}{2}}=1+\frac{p}{2}x+\frac{p(p-2)}{8}x^{2}+\frac{p(p-2)(p-4)}{2^{3}\times3!}(1+\theta x)^{\frac{p}{2}-3}x^{3}
\end{align*}
where $\theta\in (0,1)$.
Then we derive that
\begin{align*}
\mathbb{E}(|Y_{k+1}|^{p}|\mathscr{F}_{k\Delta})=|Y_{k}|^{p}\mathbb{E}\bigl(1+\frac{p}{2}\xi_{k}+\frac{p(p-2)}{8}\xi^{2}_{k}+\frac{p(p-2)(p-4)}{2^{3}\times3!}(1+\theta \xi_{k})^{\frac{p}{2}-3}\xi_{k}^{3}\bigl|\mathscr{F}_{k\Delta}\bigl).
\end{align*}

Notice that
\begin{align*}
\mathbb{E}(\xi_{k}|\mathscr{F}_{k\Delta})&=\frac{1}{|Y_{k}|^{2}}\mathbb{E}\Bigl(2\bigl\langle Y_{k},\tilde{f}(Y_{k})\Delta+\sum_{j=1}^{m}\tilde{g}_{j}(Y_{k})\Delta B^{j}_{k}+A_{k}\bigl\rangle+\\&\qquad\qquad\bigl|\tilde{f}(Y_{k})\Delta+\sum_{j=1}^{m}\tilde{g}_{j}(Y_{k})\Delta B^{j}_{k}+A_{k}\bigl|^{2}\Bigl|\mathscr{F}_{k\Delta}\Bigl)\\
&=\frac{1}{|Y_{k}|^{2}}\Bigl(\bigl(2\bigl\langle Y_{k},\tilde{f}(Y_{k})\bigl\rangle+\sum_{j=1}^{m}|\tilde{g}_{j}(Y_{k})|^{2}\bigl)\Delta
+|\tilde{f}(Y_{k})|^{2}\Delta^{2}\Bigl)+\frac{1}{|Y_{k}|^{2}}
\mathbb{E}(A_{k}^{2}|\mathscr{F}_{k\Delta}).
\end{align*}

We have used the fact that
$$\mathbb{E}(\Delta B_{k}^{i})^{2n+1}|\mathscr{F}_{k\Delta})=0, \mathbb{E}((\Delta B_{k}^{i})^{2n}|\mathscr{F}_{k\Delta})\le C\Delta^{n}$$ and
$\mathbb{E}(\Delta B_{k}^{i}\Delta B_{k}^{j}|\mathscr{F}_{k\Delta})=\delta_{ij}\Delta$. Applying Lemma 3.1, we obtain
\begin{align*}
&|L^{j_{1}}\tilde{g}_{j_{2}}(Y_{k})|
=|L^{j_{1}}\tilde{g}_{j_{2}}(Y_{k})-L^{j_{1}}\tilde{g}_{j_{2}}(0)|\le 4K_{h(\Delta)}|Y_{k}|.
\end{align*}

Meanwhile,
\begin{align*}
\frac{1}{|Y_{k}|^{2}}\mathbb{E}(A_{k}^{2}|\mathscr{F}_{k\Delta})&=\frac{1}{|Y_{k}|^{2}}\cdot\mathbb{E}\Bigl(\Bigl|\frac{1}{2}\sum_{j_{1}=1}^{m}\sum_{j_{2}=1}^{m}L^{j_{1}}\tilde{g}_{j_{2}}(Y_{k})(\Delta B_{k}^{j_{2}}\Delta B_{k}^{j_{1}}-\delta_{j_{1}j_{2}}\Delta)\Bigl|^{2}\Bigl|\mathscr{F}_{k\Delta}\Bigl)\\
&\le \frac{1}{|Y_{k}|^{2}}\cdot\frac{1}{4}m^{2}\sum_{j_{1}=1}^{m}\sum_{j_{2}=1}^{m}\Bigl|L^{j_{1}}\tilde{g}_{j_{2}}(Y_{k})\Bigl|^{2}\mathbb{E}\Bigl((\Delta B_{k}^{j_{2}}\Delta B_{k}^{j_{1}}-\delta_{j_{1}j_{2}}\Delta)^{2}\Bigl|\mathscr{F}_{k\Delta}\Bigl)\\
&\le CK_{h(\Delta)}^{2}\Delta^{2}=o(\Delta)
\end{align*}
and
\begin{align*}
|\tilde{f}(Y_{k})|^{2}\Delta^{2}=|\tilde{f}(Y_{k})-\tilde{f}(0)|^{2}\Delta^{2}\le 16K_{h(\Delta)}^{2}\Delta^2 |Y_{k}|^{2}=o(\Delta)|Y_{k}|^{2}.
\end{align*}

Therefore, we have
\begin{align*}
\mathbb{E}(\xi_{k}|\mathscr{F}_{k\Delta})&\le \frac{1}{|Y_{k}|^{2}}\Bigl(\bigl(2\bigl\langle Y_{k},\tilde{f}(Y_{k})\bigl\rangle
+\sum_{j=1}^{m}|\tilde{g}_{j}(Y_{k})|^{2}\bigl)\Delta\Bigl)+o(\Delta).
\end{align*}

Secondly, we use the same way as above to yield
\begin{align*}
\mathbb{E}(\xi_{k}^{2}|\mathscr{F}_{k\Delta})&=\frac{1}{|Y_{k}|^{4}}\mathbb{E}\Bigl(\bigl(2\bigl\langle Y_{k},\sum_{j=1}^{m}\tilde{g}_{j}(Y_{k})\Delta B^{j}_{k}\bigl\rangle+B\bigl)^{2}\Bigl|\mathscr{F}_{k\Delta}\Bigl)\\
&=\frac{1}{|Y_{k}|^{4}}\mathbb{E}\Bigl(4\bigl\langle Y_{k},\sum_{j=1}^{m}\tilde{g}_{j}(Y_{k})\Delta B^{j}_{k}\bigl\rangle^{2}+B^{2}+4B\bigl\langle Y_{k},\sum_{j=1}^{m}\tilde{g}_{j}(Y_{k})\Delta B^{j}_{k}\bigl\rangle\Bigl|\mathscr{F}_{k\Delta}\Bigl)
\end{align*}
where
\begin{align*}
B:=2\langle Y_{k},\tilde{f}(Y_{k})\Delta+A_{k}\rangle+
|\tilde{f}(Y_{k})\Delta+\sum_{j=1}^{m}\tilde{g}_{j}(Y_{k})\Delta B_{k}^{j}+A_{k}|^{2}.
\end{align*}
To get the estimate of $\mathbb{E}(\xi_{k}^{2}|\mathscr{F}_{k\Delta})$, we need show that
\begin{align*}
\frac{1}{|Y_{k}|^{4}}\mathbb{E}(B^{2}|\mathscr{F}_{k\Delta})&\le\frac{3}{|Y_{k}|^{4}}\cdot\bigl(4|Y_{k}|^{2}\cdot|\tilde{f}(Y_{k})|^{2}\Delta^{2}\bigl)+\frac{3}{|Y_{k}|^{4}}\cdot|Y_{k}|^{2}\mathbb{E}(|A_{k}|^{2}|\mathscr{F}_{k\Delta})\\
&\quad+\frac{3}{|Y_{k}|^{4}}\mathbb{E}\Bigl(|\tilde{f}(Y_{k})\Delta+\sum_{j=1}^{m}\tilde{g}_{j}(Y_{k})\Delta B_{k}^{j}+A_{k}|^{4}\Bigl|\mathscr{F}_{k\Delta}\Bigl)\\
&\le \frac{3}{|Y_{k}|^{4}}\cdot4|Y_{k}|^{2}\cdot 16|K_{h(\Delta)}|^{2}|Y_{k}|^{2}\Delta^{2}+\frac{3}{|Y_{k}|^{4}}\cdot|Y_{k}|^{2}\cdot|Y_{k}|^{2}o(\Delta)\\
&\quad+\frac{3^{4}}{|Y_{k}|^{4}}\bigl(|\tilde{f}(Y_{k})|^{4}\Delta^{4}
+C\sum^{m}_{j=1}|\tilde{g}_{j}(Y_{k})|^{4}\Delta^{2}
+\mathbb{E}(|A_{k}|^{4}|\mathscr{F}_{k\Delta})\bigl).
\end{align*}

Since
\begin{align*}
\mathbb{E}(|A_{k}|^{4}|\mathscr{F}_{k\Delta})&=\mathbb{E}\Bigl(\Bigl|\frac{1}{2}\sum_{j_{1}=1}^{m}\sum_{j_{2}=1}^{m}L^{j_{1}}\tilde{g}_{j_{2}}(Y_{k})(\Delta B_{k}^{j_{2}}\Delta B_{k}^{j_{1}}-\delta_{j_{1}j_{2}}\Delta)\Bigl|^{4}\Bigl|\mathscr{F}_{k\Delta}\Bigl)\\
&\le C\sum_{j_{1}=1}^{m}\sum_{j_{2}=1}^{m}\Bigl|L^{j_{1}}\tilde{g}_{j_{2}}(Y_{k})\Bigl|^{4}\mathbb{E}\Bigl((\Delta B_{k}^{j_{2}}\Delta B_{k}^{j_{1}}-\delta_{j_{1}j_{2}}\Delta)^{4}\Bigl|\mathscr{F}_{k\Delta}\Bigl)\\
&\le CK_{h(\Delta)}^{4}|Y_{k}|^{4}\cdot\Delta^{4}\\&
\le C|Y_{k}|^{4}o(\Delta),
\end{align*}
then we have
$$\frac{1}{|Y_{k}|^{4}}\mathbb{E}(B^{2}|\mathscr{F}_{k\Delta})\le o(\Delta).$$

Note that
$$\aligned &B\bigl\langle Y_{k},\sum_{j=1}^{m}\tilde{g}_{j}(Y_{k})\Delta B^{j}_{k}\bigl\rangle\\&
=2\langle Y_{k},\tilde{f}(Y_{k})\Delta\rangle\cdot\Bigl\langle Y_{k},\sum_{j=1}^{m}\tilde{g}_{j}(Y_{k})\Delta B^{j}_{k}\Bigl\rangle
+2\langle Y_{k},A_{k}\rangle\cdot\Bigl\langle Y_{k},\sum_{j=1}^{m}\tilde{g}_{j}(Y_{k})\Delta B^{j}_{k}\Bigl\rangle\\&
\quad+|\tilde{f}(Y_{k})|^{2}\Delta^{2}\Bigl\langle Y_{k},\sum_{j=1}^{m}\tilde{g}_{j}(Y_{k})\Delta B^{j}_{k}\Bigl\rangle+
\Bigl\langle Y_{k},|\sum_{j=1}^{m}\tilde{g}_j(Y_{k})\Delta B_{k}^{j}|^2\sum_{j=1}^{m}\tilde{g}_j(Y_{k})\Delta B_{k}^{j}\Bigl\rangle\\&
\quad+\Bigl\langle Y_{k},A_{k}^{2}\cdot\sum_{j=1}^{m}\tilde{g}_{j}(Y_{k})\Delta B^{j}_{k}\Bigl\rangle+\Bigl\langle \tilde{f}(Y_{k})\Delta,\sum_{j=1}^{m}\tilde{g}_{j}(Y_{k})\Delta B^{j}_{k}\Bigl\rangle\cdot\Bigl\langle Y_{k},\sum_{j=1}^{m}\tilde{g}_{j}(Y_{k})\Delta B^{j}_{k}\Bigl\rangle\\
&\quad+\Bigl\langle \tilde{f}(Y_{k})\Delta,A_{k}\Bigl\rangle\cdot\Bigl\langle Y_{k},\sum_{j=1}^{m}\tilde{g}_{j}(Y_{k})\Delta B^{j}_{k}\Bigl\rangle
+\Bigl\langle \sum_{j=1}^{m}\tilde{g}_{j}(Y_{k})\Delta B^{j}_{k},A_{k}\Bigl\rangle\cdot\Bigl\langle Y_{k},\sum_{j=1}^{m}\tilde{g}_{j}(Y_{k})\Delta B^{j}_{k}\Bigl\rangle\quad\endaligned$$

It is easy to obtain that the conditional expectations of the first five terms and the seventh term are all zero.

Then we have
\begin{align*}
&\quad\frac{1}{|Y_{k}|^{4}}\mathbb{E}\Bigl(B\bigl\langle Y_{k},\sum_{j=1}^{m}\tilde{g}_{j}(Y_{k})\Delta B^{j}_{k}\bigl\rangle\Bigl|\mathscr{F}_{k\Delta}\Bigl)\\
&=\frac{2}{|Y_{k}|^{4}}\mathbb{E}\Bigl(\Bigl\langle \tilde{f}(Y_{k})\Delta,\sum_{j=1}^{m}\tilde{g}_{j}(Y_{k})\Delta B^{j}_{k}\Bigl\rangle\cdot\Bigl\langle Y_{k},\sum_{j=1}^{m}\tilde{g}_{j}(Y_{k})\Delta B^{j}_{k}\Bigl\rangle\Bigl|\mathscr{F}_{k\Delta}\Bigl)\\
&\quad+\frac{2}{|Y_{k}|^{4}}\mathbb{E}\Bigl(\Bigl\langle \sum_{j=1}^{m}\tilde{g}_{j}(Y_{k})\Delta B^{j}_{k},A_{k}\Bigl\rangle\cdot\Bigl\langle Y_{k},\sum_{j=1}^{m}\tilde{g}_{j}(Y_{k})\Delta B^{j}_{k}\Bigl\rangle\Bigl|\mathscr{F}_{k\Delta}\Bigl)\\
&\le \frac{2}{|Y_{k}|^{4}}\cdot |\tilde{f}(Y_{k})|\Delta\cdot|Y_{k}|\cdot(\sum_{j=1}^{m}|\tilde{g}_{j}(Y_{k})|^{2}\Delta)\\
&\quad+\frac{2}{|Y_{k}|^{4}}\cdot|Y_{k}|\sum_{j_1,j_2,j_3,j_4=1}^{m}
|L^{j_1}\tilde{g}_{j_{2}}(Y_{k})|\cdot|\tilde{g}_{j_3}(Y_{k})|\cdot|\tilde{g}_{j_4}(Y_{k})|
\\&\qquad\qquad\qquad\qquad\cdot\mathbb{E}\Bigl(\Bigl|\Delta B^{j_3}_{k}\Delta B^{j_4}_{k}(\Delta B_{k}^{j_{1}}\Delta B_{k}^{j_{2}}-\delta_{j_{1}j_{2}}\Delta)\Bigl|\Bigl)\\
&\le CK_{h(\Delta)}^{3}\Delta^{2} +CK_{h(\Delta)}^{3}\Delta^{2}\le o(\Delta).
\end{align*}

Therefore,
\begin{align*}
\mathbb{E}(\xi_{k}^{2}|\mathscr{F}_{k\Delta})\le \frac{4}{|Y_{k}|^{2}}\sum_{j=1}^{m}|\tilde{g}_{j}(Y_{k})|^{2}\Delta+o(\Delta).
\end{align*}

For $p\ge 6$, we have
\begin{align*}
\mathbb{E}(|\xi_{k}|^{\frac{p}{2}}|\mathscr{F}_{k\Delta})&\le \frac{1}{|Y_{k}|^{p}}C_{p}\Bigl(|Y_{k}|^{\frac{p}{2}}
|\tilde{f}(Y_{k})|^{\frac{p}{2}}\Delta^{\frac{p}{2}}
+|Y_{k}|^{\frac{p}{2}}|\sum_{j=1}^{m}\tilde{g}_{j}(Y_{k})|^{\frac{p}{2}}\cdot\mathbb{E}(|\Delta B_{k}|^{\frac{p}{2}}|\mathscr{F}_{k\Delta})\\
&\quad+|Y_{k}|^{\frac{p}{2}}|\mathbb{E}(|A_{k}|^{\frac{p}{2}}|\mathscr{F}_{k\Delta})
+|\tilde{f}(Y_{k})|^{p}\Delta^{p}
\\&\quad+\Bigl|\sum_{j=1}^{m}\tilde{g}_{j}(Y_{k})\Bigl|^{p}\cdot\mathbb{E}(|\Delta B_{k}|^{p}|\mathscr{F}_{k\Delta})+\mathbb{E}(|A_{k}|^{p}|\mathscr{F}_{k\Delta})\Bigl)\\
&\le CK_{h(\Delta)}^{\frac{p}{2}}\Delta^{\frac{p}{2}}+CK_{h(\Delta)}^{\frac{p}{2}}\Delta^{\frac{p}{4}}
+\frac{C}{|Y_{k}|^{\frac{p}{2}}}\mathbb{E}(|A_{k}|^{\frac{p}{2}}|\mathscr{F}_{k\Delta})
\\&\quad+K_{h(\Delta)}^{p}\Delta^{p}+K_{h(\Delta)}^{p}\Delta^{\frac{p}{2}}
+\frac{C}{|Y_{k}|^{p}}\mathbb{E}(|A_{k}|^{p}|\mathscr{F}_{k\Delta}).
\end{align*}

Note that
\begin{align*}
\mathbb{E}(|A_{k}|^{i}|\mathscr{F}_{k\Delta})&=\mathbb{E}\Bigl(\Bigl|\frac{1}{2}\sum_{j_{1}=1}^{m}\sum_{j_{2}=1}^{m}L^{j_{1}}\tilde{g}_{j_{2}}(Y_{k})(\Delta B_{k}^{j_{2}}\Delta B_{k}^{j_{1}}-\delta_{j_{1}j_{2}}\Delta)\Bigl|^{i}\Bigl|\mathscr{F}_{k\Delta}\Bigl)\\
&\le CK_{h(\Delta)}^{i}\Delta^{i}|Y_{k}|^{i}.
\end{align*}

Thus we obtain
\begin{align*}
\mathbb{E}(|\xi_{k}|^{\frac{p}{2}}|\mathscr{F}_{k\Delta})\le CK_{h(\Delta)}^{\frac{p}{2}}\Delta^{\frac{p}{4}}=o(\Delta).
\end{align*}
Indeed,
\begin{align*}
\lim_{\Delta\rightarrow 0}\frac{K_{h(\Delta)}^{\frac{p}{2}}\Delta^{\frac{p}{4}}}{\Delta}=(K_{h(\Delta)}\Delta^{\frac{1}{6}}\cdot\Delta^{\frac{1}{3}-\frac{2}{p}})^{\frac{p}{2}}=0
\end{align*}
where $\frac{1}{3}-\frac{2}{p}\ge 0$ for $p\ge 6$.

Moreover, since
$$\mathbb{E}((1+\theta\xi_k)^{\frac{p}{2}-3}\xi_k^3|\mathscr{F}_{k\Delta})\le C\mathbb{E}((|\xi_k|^3+|\xi_k|^\frac{p}{2})|\mathscr{F}_{k\Delta})\le C(K_{h(\Delta)}^{3}\Delta^{\frac{3}{2}}+K_{h(\Delta)}^{\frac{p}{2}}\Delta^{\frac{p}{4}})=o(\Delta),$$
We have
\begin{align*}
\mathbb{E}(|Y_{k+1}|^{p}|\mathscr{F}_{k\Delta})&\le |Y_{k}|^{p}\Bigl(1+\frac{p}{2}\cdot\frac{1}{|Y_{k}|^{2}}\bigl(2\bigl\langle Y_{k},\tilde{f}(Y_{k})\bigl\rangle+\sum_{j=1}^{m}|\tilde{g}_{j}(Y_{k})|^{2}\bigl)\Delta\\
& \quad+\frac{p(p-2)}{8}\cdot \frac{4}{|Y_{k}|^{2}}\sum_{j=1}^{m}|\tilde{g}_{j}(Y_{k})|^{2}\Delta+o(\Delta) \Bigl)                                          \\
&=|Y_{k}|^{p}\Bigl(1+p\cdot\frac{1}{|Y_{k}|^{2}}\bigl(2\bigl\langle Y_{k},\tilde{f}(Y_{k})\bigl\rangle+\frac{(p-1)}{2}\sum_{j=1}^{m}|\tilde{g}_{j}(Y_{k})|^{2}\bigl)\Delta+o(\Delta)\Bigl)\\
&\le|Y_{k}|^{p}(1-p\lambda\Delta+o(\Delta)).
\end{align*}

Now for any given $\varepsilon\in(0,\lambda)$ sufficiently small, we can choose $\Delta^{\ast}\in(0,1)$ sufficiently small such that for all $\Delta\in(0,\Delta^{\ast})$ $\frac{o(\Delta)}{\Delta}\le p\varepsilon$.

Thus,
\begin{align*}
\mathbb{E}(|Y_{k+1}|^{p}|\mathscr{F}_{k\Delta})&\le|Y_{k}|^{p}(1-p(\lambda-\varepsilon)\Delta).
\end{align*}
Taking expectation on both sides yields
\begin{align*}
\mathbb{E}(|Y_{k+1}|^{p})\le \mathbb{E}(|Y_{k}|^{p})(1-p(\lambda-\varepsilon)\Delta).
\end{align*}
Thus, for any $k\ge 1$,
\begin{align*}
 \mathbb{E}(|Y_{k}|^{p})\le |x(0)|^{p}(1-p(\lambda-\varepsilon)\Delta)^{k}\le|x(0)|^{p}e^{-kp(\lambda-\varepsilon)\Delta}.
\end{align*}
So pth moment exponential stability of $Y_k$, or (\ref{E31}), hold.

In the end, we give the proof of (\ref{E32}). Given any fixed $\Delta>0$ and suppose $k\Delta\le t<(k+1)\Delta$. Then
\begin{align*}
Y(t)-\bar{Y}(t)&=\tilde{f}(Y_{k})(t-k\Delta)+\sum_{j=1}^{m}\tilde{g}_{j}(Y_{k})(B^{j}(t)-B^{j}(k\Delta))\\
&+\frac{1}{2}\sum_{j_{1}=1}^{m}\sum_{j_{2}=1}^{m}L^{j_{1}}
\tilde{g}_{j_{2}}(B^{j_{2}}(t)B^{j_{1}}(t)-B^{j_{2}}(k\Delta)B^{j_{1}}(k\Delta)).
\end{align*}

Therefore, for $\Delta$ small enough, there exists $C>0$  such that
\begin{align*}
\mathbb{E}(|Y(t)|^{p})&\le 2^{p-1}\bigl(\mathbb{E}(|Y(t)-Y_{k}|^{p})+\mathbb{E}(|Y_{k}|^{p})\bigl)\\
&\le C\mathbb{E}(|Y_{k}|^{p}).
\end{align*}
It follows that
\begin{align*}
\limsup_{t\rightarrow 0}\frac{\log\mathbb{E}(|Y(t)|^{p})}{t}&\le\limsup_{k\rightarrow \infty}\sup_{k\Delta\le t<(k+1)\Delta}\frac{\log\mathbb{E}(|Y(t)|^{p})}{t}\\
&\le\limsup_{k\rightarrow \infty}\sup_{k\Delta\le t<(k+1)\Delta}\frac{\log C+\log\mathbb{E}(|Y_{k}|^{p})}{t}\\
&\le\limsup_{k\rightarrow \infty}\frac{\log C+p\log|x_{0}|-kp(\lambda-\varepsilon)\Delta}{(k+1)\Delta}\\
&=-p(\lambda-\varepsilon).
\end{align*}
We complete the proof.
\end{proof}

\section{Numerical examples}

Now let us present some examples to interpret our Theorem \ref{N95} and Theorem \ref{E3}.\\

\textbf{Example 1} Consider the following  stochastic differential equation
\begin{equation}
\aligned dx(t)=\begin{pmatrix}
x_{1}(t)-2x_{1}(t)e^{|x(t)|}-x_{2}(t)e^{|x(t)|}\\
x_{2}(t)+x_{1}(t)e^{|x(t)|}-2x_{2}(t)e^{|x(t)|}\\
\end{pmatrix}dt+\begin{pmatrix}
x_{1}(t)e^{\frac{|x(t)|}{2}} \\
x_{2}(t)e^{\frac{|x(t)|}{2}} \\
\end{pmatrix}dB_{t}\endaligned
\end{equation}
with initial value $x_0$. It is obvious that neither $f(x)$ nor $g(x)$ is polynomial growing (although both are local Lipschitz continuous). However, we can show that conditions (\ref{B17}) holds for $p=5$. That is to say
\begin{equation}
x^{T}f(x)+\frac{p-1}{2}|g(x)|^{2}=|x|^{2}\le 1+|x|^{2}.
\end{equation}
Moreover, in this case, Assumption \ref{A3} holds for $K_{R}=3Re^{R}$. Then we can get $\bar{K}_{R}=2RK^{2}_{R}=18R^3e^{2R}$, where $R$ is big enough.

Then for any $0<\varepsilon<1$, we can choose $l(x)=\frac{1}{18^{5/2}x^{15/2+\varepsilon}e^{5x}}$ for $x>0$. It is clear that $l$ is strictly decreasing in $(0,\infty)$. Let $h$ be the inverse function of $l$. Then $h$ is strictly decreasing in $(0,\Delta^{\ast})$ for sufficiently small $\Delta^*$ and $h(\Delta)\rightarrow\infty$ as $\Delta\rightarrow 0$. Therefore, we have $\bar{K}_{h(\Delta)}^{5/2}\Delta=\bar{K}_{h(\Delta)}^{5/2}l(h(\Delta))
=\frac{18^{5/2}h(\Delta)^{15/2}e^{5h(\Delta)}}{18^{5/2}h(\Delta)^{15/2+\varepsilon}e^{5h(\Delta)}}=h(\Delta)^{-\varepsilon}\to 0$ as $\Delta\to0$. At the same time
\begin{align*}
(\Delta^{q}\bar{K}_{h(\Delta)}^{5q/2})^{\frac{-1}{p-q}}=(h(\Delta)^{-\varepsilon})^{\frac{-q}{p-q}}=(h(\Delta))^{\frac{\varepsilon q}{p-q}}\le h(\Delta)
\end{align*}
for $q\le p/(1+\varepsilon)$.

Then by Theorem \ref{N95}, for any $T>0$ and sufficient small $\Delta$, we have
\begin{equation}
\|x(T)-Y(T)\|_{L^q}\le C(h(\Delta))^{-\varepsilon}\rightarrow 0
\end{equation}
for all $q\le 5/(1+\varepsilon)$.

Although the convergence rate is extremely slow, it at least indicates that our convergence result holds for some cases when the coefficients are exponentially growing.

\textbf{Example 2} Consider the following SDE
\begin{equation}\label{N96}
\aligned dx(t)=\begin{pmatrix}
1 - 3x_{1}^3(t) + x_{2}(t)\\
x_{1}(t)\\
\end{pmatrix}dt+\begin{pmatrix}
x_{1}^2(t) & 0 \\
0 & x_{2}(t) \\
\end{pmatrix}dB_{t}\endaligned
\end{equation}
with the initial value $x_{0} = (1,1)^{T}$. It is easy to know that $K_{R}=9R^{2}$ and $\bar{K}_{R}=81R^{5}$ for Assumption \ref{A3}. And \ref{A2} holds for $p=7$ since
\begin{align*}
x^{T}f(x)+\frac{p-1}{2}|g(x)|^2=x_1-3x_1^4+x_1x_2 + x_1x_2 + 3x_1^4 + 3x_2^2 \le 4(1 + x_1^2 + x_2^2) = 4(1+|x|^2).
\end{align*}
Besides, the diffusion coefficient $g$ satisfies the commutativity condition.
We choose $h(\Delta)=\Delta^{-\frac{2\varepsilon}{25}}$ where $\varepsilon>0$ small enough. Then $\bar{K}_{h(\Delta)}^{5/2}\Delta=3^{10}\Delta^{1-\varepsilon}\to0$. Moreover,
\begin{align*}
(\Delta \bar{K}^{5/2}_{h(\Delta)})^{\frac{-q}{p-q}}\le(1/\Delta)^{(1-\varepsilon)q/(p-q)}\le \Delta^{-2\varepsilon/25}=h(\Delta)
\end{align*}
holds for $q\le \frac{14\varepsilon }{25-23\varepsilon}$. Therefore, by Theorem \ref{N95}, we obtain
\begin{align*}
\|x(T)-Y(T)\|_{L^{q}}\le C\Delta^{1-\varepsilon}.
\end{align*}
That is, the convergence rate is close to 1.

For the computer simulations, we regard the numerical solution with the step size of $2^{-15}$ as the true solution and plot the strong errors of the modified truncated Milstein method with step sizes $2^{-11},2^{-10},2^{-9}$ and $2^{-8}$, respectively.

\begin{figure}[htbp]
  \centering
  \includegraphics[width=0.3\textwidth]{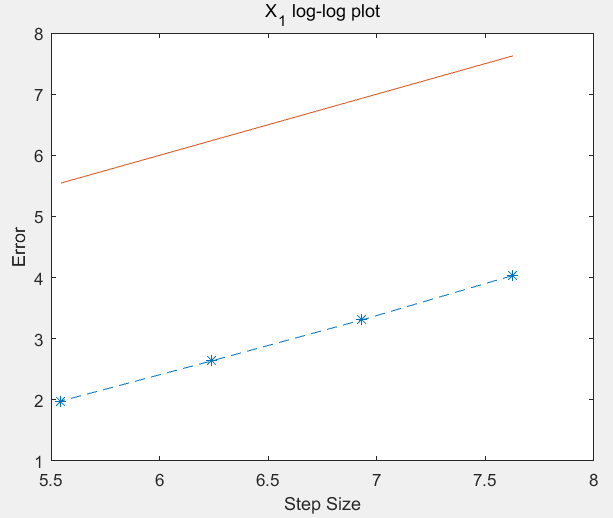}
  \caption{The strong convergence order at the terminal time $T = 1$. The red dashed line is the reference line with the slope of 1.}\label{fig:digit}
\end{figure}

\begin{figure}[htbp]
  \centering
  \includegraphics[width=0.3\textwidth]{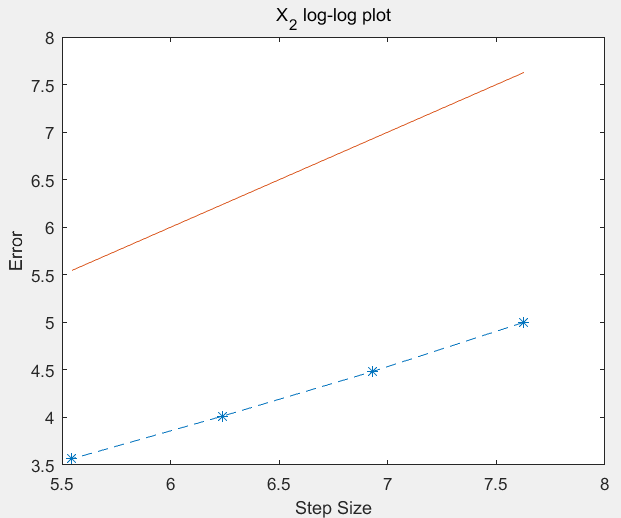}
  \caption{The strong convergence order at the terminal time $T = 1$. The red dashed line is the reference line with the slope of 1.}\label{fig:digit}
\end{figure}

Figures 1 and 2 indicate that our theoretical result holds for the polynomial growing case, and the convergence rate of the two components can be arbitrarily close to 1.

\textbf{Example 3} Consider the SDE
\begin{equation}\label{N97}
\aligned dx(t)=\begin{pmatrix}
-x_1(t) - 2x_{1}^3(t) - x_{2}(t)\\
-x_{2}(t)+x_1(t)-2x_2^3(t)\\
\end{pmatrix}dt+|x|xdB_{t}\endaligned
\end{equation}
with the initial value $x_{0} = (1,-1)^{T}$.
First, we have
\begin{align*}
x^{T}f(x)+\frac{7-1}{2}|g(x)|^{2}=-x_1^2-x_2^2-0.5(x_1^4 + x_2^4) \le -|x|^{2}.
\end{align*}
So $p=7$ and $\lambda=1$ for Assumption \ref{E21}, By Theorem 4.4 in \cite{AA}, we know the 6th moment of the unique global solution of (\ref{N97}) is exponentially stable with order 7.

We can get easily $K_{R}=18R^{2}$. If we choose $h(\Delta)=\Delta^{-\frac{1}{13}}$, then we have $K^{6}_{h(\Delta)}\Delta=18^{6}\Delta^{\frac{1}{13}}\rightarrow 0$. Then by Theorem \ref{E3}, we have that for any $\varepsilon<1$, we can choose $\Delta$ sufficiently small such that
\begin{align*}
\lim_{k\rightarrow \infty}\sup\frac{\log\mathbb{E}(|Y_{k}|^{7})}{k\Delta}\le -7(1-\varepsilon)
\end{align*}
and
\begin{align*}
\lim_{t\rightarrow \infty}\sup\frac{\log\mathbb{E}(|Y(t)|^{7})}{t}\le -7(1-\varepsilon).
\end{align*}
Thus, the modified truncated Milstein method is close to the exponential stability of the exact solution for the given SDE.

We choose $\Delta=2^{-10}$ and $k=5\cdot2^{12}$ for computer simulation, then we obtain Figure 3 and 4. Figure 3 clearly suggests that $Y^{(1)}_{k}$ is asymptotically stable.

\begin{figure}[htbp]
 \centering
  \includegraphics[width=0.3\textwidth]{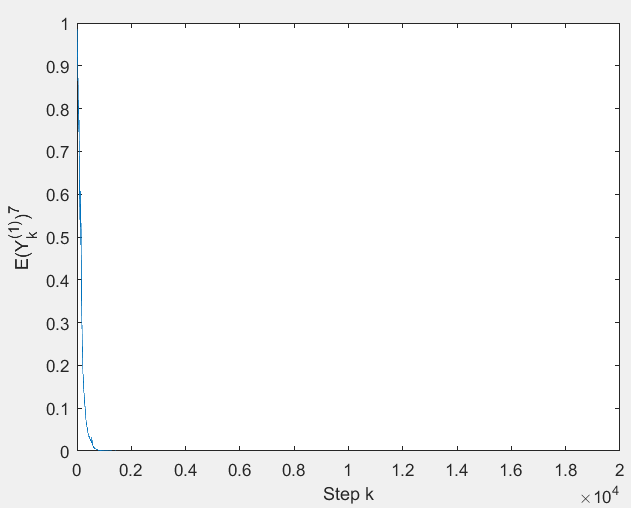}
  \caption{Path of $E(Y_k^{(1)})^7$.}\label{fig:digit}
\end{figure}

Figure 4 implies that $\lim_{k\rightarrow \infty}\sup\frac{\log\mathbb{E}(|Y_{k}^{(1)}|^{7})}{k\Delta}\le-7<0$ for $k$ large enough.
\begin{figure}[htbp]
 \centering
  \includegraphics[width=0.3\textwidth]{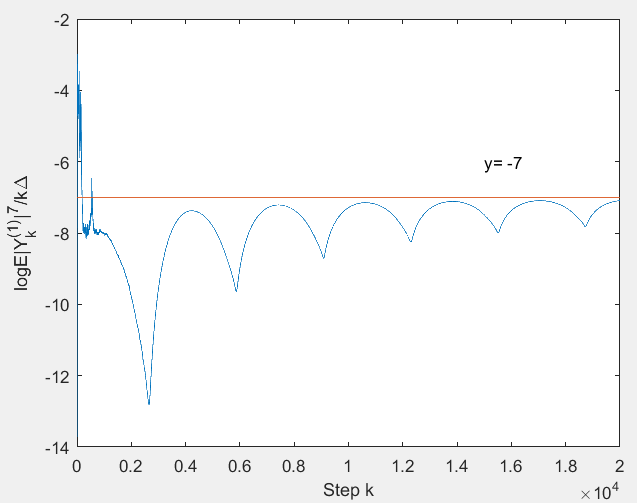}
  \caption{Path of $\frac{\log E|Y_k^{(1)}|^7}{k\Delta}$.}\label{fig:digit}
\end{figure}

For $Y^{(2)}_{k}$,  we obtain Figure 5 and 6, which imply that $Y^{(2)}_{k}$ is asymptotically exponentially stable.

\begin{figure}[htbp]
 \centering
  \includegraphics[width=0.3\textwidth]{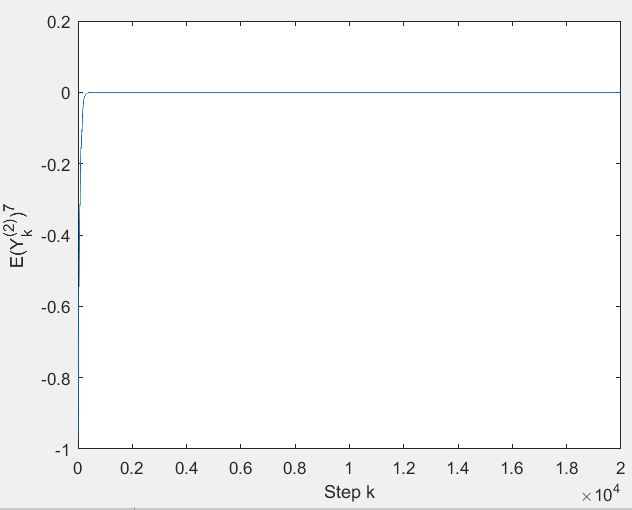}
  \caption{Path of $E(Y_k^{(2)})^7$}.\label{fig:digit}
\end{figure}

\begin{figure}[htbp]
 \centering
  \includegraphics[width=0.3\textwidth]{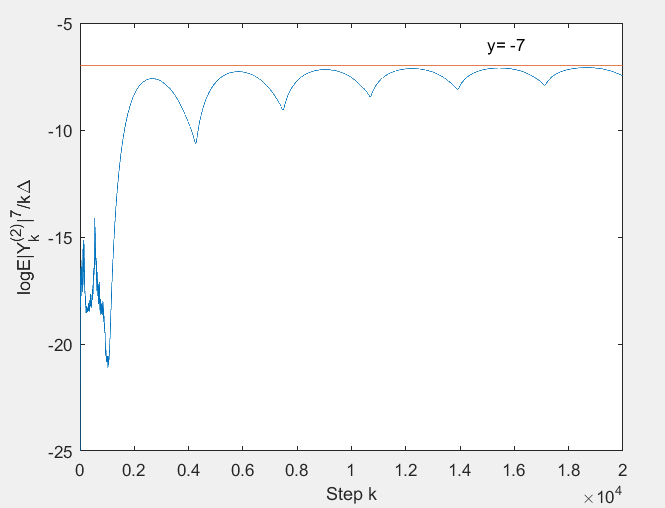}
  \caption{Path of $\frac{\log E|Y_k^{(2)}|^7}{k\Delta}.$}\label{fig:digit}
\end{figure}

\end{document}